\newcommand{\hu}{\hat u}
\newcommand{\hv}{\hat v}
\newtheorem{mylemma}{{Lemma}}[section]
\newtheorem{mytheorem}{{Theorem}}[section]
\newtheorem{myproposition}{{Proposition}}[section]
\newtheorem{mydefinition}{{Definition}}[section]
\newtheorem{remark}{Remark}[section]
\newcommand\m[1]{{\color{black}{#1}}}
\begin{document}



\title{A geometrically consistent trace finite element method for the Laplace-Beltrami eigenvalue  problem\thanks{This work is partially supported by  NSFC grant (No. 11971469) and by the National Key R\&D Program of China under Grant 2018YFB0704304 and Grant 2018YFB0704300.}}


\author{
Song Lu \thanks{LSEC, ICMSEC, NCMIS,  Academy of Mathematics and Systems Science, Chinese Academy of Sciences, Beijing 100190, China;
 University of  Chinese Academy of Sciences, Beijing 100049, China, \email{ lusong@lsec.cc.ac.cn} }
\and Xianmin Xu 
\thanks{
LSEC, ICMSEC, NCMIS,  Academy of Mathematics and Systems Science, Chinese Academy of Sciences, Beijing 100190, China;
 Corresponding author, \email{xmxu@lsec.cc.ac.cn}.
}
}

\date{}
\maketitle
\begin{abstract}
In this paper, we propose a new trace finite element method for the {Laplace-Beltrami} eigenvalue problem. The method is proposed directly on a smooth manifold which is implicitly given by a level-set function and require high order numerical quadrature  on the surface.
A comprehensive analysis for the method is provided. We show that the eigenvalues of the discrete Laplace-Beltrami operator coincide with only part of the eigenvalues of an embedded problem, which further corresponds to the finite eigenvalues for a singular generalized algebraic eigenvalue problem. The finite eigenvalues can be efficiently solved by a rank-completing perturbation algorithm in {\it Hochstenbach et al. SIAM J. Matrix Anal. Appl., 2019} \cite{hochstenbach2019solving}. We  prove the method has
optimal convergence rate.  Numerical experiments verify the theoretical analysis
and  show that the geometric consistency can improve the numerical accuracy significantly.
\end{abstract}







\section{Introduction}
Many problems in applied sciences and engineering can {be modeled} by partial differential equations or eigenvalue problems on surfaces. Typical examples
include diffusion of insoluble surfactant on two-phase flow interfaces\cite{Milliken,Stone,GReusken2011}, flows and phase separation in cell
membranes\cite{arroyo2009relaxation,simons1997functional,ElliotStinner,Novaketal} and shape characterization in image processing\cite{bertalmio2000variational,reuter2006laplace,xu2004discrete}, etc.
In particular, the Laplace-Beltrami eigenvalue problem has important applications to characterize  the shape of a surface, and is referred as the shape-DNA in literature\cite{reuter2006laplace,nasikun2018fast}. The spectra of the Laplace-Beltrami operator is also an important topic in \m{geometry} \cite{mckean1967curvature,gordon1992isospectral,buser2010geometry}, starting from the well-known Weyl theorem on asymptotic growth of eigenvalues\cite{weyl1912asymptotische}. It is found that  the spectra of the Laplace-Beltrami \m{operator is} isometric invariant and many important geometric property can be computed thereby \cite{craioveanu2013old}.
The nice property is also crucial in many applications in inverse problems\cite{gordon1992one}.

Solving partial differential equations on surfaces has arisen much interest in the community of numerical analysis recently. Various numerical methods have been developed, \m{including} the finite difference methods \cite{bertalmio2000framework,XuZh,ruuth2008simple,leung2011grid,macdonald2010implicit,beale2020solving}, finite element methods\cite{dziuk1988finite,dziuk2007finite,dziuk2007surface,demlow2009higher,deckelnick2010h,kovacs2018high},
meshless methods \cite{lehto2017radial,li2016convergent,liang2013solving} and many others \cite{du2003voronoi,dede2015isogeometric}. More information can be found in the recent review papers \cite{bonito2020finite,dziuk2013finite,olshanskii2017trace} and the references therein.
In this work, we will focus on the trace finite element method, which was first developed by Olshanskii, Reusken and Grade in \cite{olshanskii2009finite}
for {Laplace-Beltrami} equations on stationary surfaces. Assume that the surface is embedded in a bulk domain which is triangulated and incorporated  with some standard finite element spaces. Then the discrete surface is constructed by {piecewise} planar approximations of the smooth surface. The key idea of the trace FEM is to use the traces of the bulk finite element functions on the discrete
surface to construct the finite element spaces on surfaces.
The method has optimal convergence even though
the resulted linear {algebraic} problem might be {degenerate}.
The matrix property is further studied in \cite{olshanskii2010finite}. To overcome  the degeneracy of the system, some stabilization techniques have been developed in
\cite{burman2015stabilized,burman2016full,larson2020stabilization}, where the method is called the cut-FEM. The trace FEM has also been further
developed in several directions, like to consider higher order finite element approximations \cite{reusken2015analysis,lehrenfeld2016high,grande2016higher,grande2018analysis,larson2020stabilization}, to use {discontinuous} Galerkin approximations\cite{burman2016cutb} and
 adaptive finite element approximations\cite{demlow2007adaptive,chernyshenko2015adaptive}, and  to solve problems on evolving surfaces\cite{olshanskii2014eulerian,olshanskii2017trace,lehrenfeld2018stabilized}, etc.
Recently, the method has also been applied to study the Navier-Stokes equations and the phase-field equations
on surfaces\cite{olshanskii2021inf,yushutin2020numerical,olshanskii2021finite}.

In comparison with partial differential equations on surfaces,
 the numerical study on the \m{Laplace-Beltrami} eigenvalue problem is relatively few in the literature. Previous methods include  the closest point method \cite{macdonald2011solving}, the parametrization method \cite{glowinski2008computing}, the discontinuous Galerkin method \cite{dong2020discontinuous}, etc.
We would like to use the trace FEM to solve the Laplace-Beltrami eigenvalue problem in this work.
The difficulties to approximate the LB eigenvalue problem by the trace finite element method come from two aspects.
Firstly, the number of freedoms is usually  larger than the dimension of the trace FEM spaces. In this case, the definition
of the discrete eigenvalue problems is not clear since there will be many false {solutions} for the discrete (generalized)
eigenvalue problems. Secondly, the discretization of the surface will introduce some geometric errors which affect the accuracy {of the} eigenvalue problems. The degeneracy of the trace finite element method on discrete surfaces may cause severe
problems in calculating true eigenvalues.
The geometric inconsistency errors {appear} in almost
all the previous numerical methods \cite{bonito2020finite}. A few methods which are geometrically consistent are {those} by isogeometric analysis\cite{dede2015isogeometric}, the method with exact geometric description\cite{gfrerer2018high} and the recent developed intrinsic finite element method \cite{bachini2021intrinsic}.

In this paper, we develop a new geometrically consistent trace FEM method for the {Laplace-Beltrami} eigenvalue problem. The method is based on the high order quadrature directly proposed  on curved surfaces and also  on the approximation of the embedded problems. We carefully analyse the discrete embedded problems and
give the conditions under which the embedded problem is equivalent to the original ones, where the geometric consistency can play an important role.
We also prove that the true eigenvalues of the trace finite element approximation coincide  with the finite eigenvalues of a singular generalized algebraic eigenvalue problem\cite{Demmel2000}. This {enables} us to utilize a rank-completing perturbation algorithm in  \cite{hochstenbach2019solving} to solve the  possibly singular discrete {eigenvalue} problem.
We provide a detailed error analysis of the method and prove that the optimal convergence rate can be achieved.
Numerical examples are given for both the Laplace-Beltrami equation and the \m{corresponding} eigenvalue problems.  It is found that the geometrical consistency can improve the accuracy dramatically in both cases. For the Laplace-Beltrami  eigenvalue problems,
we show that the geometrically consistent method can produce much \m{fewer} false
eigenvalues than the original trace finite element method.
In addition, we would like to remark that the method can be directly extended
to higher order finite element approximations.

The rest of the paper is organized as follows.
In section 2, we briefly introduce the continuous model problems.
In section 3, we consider the discretization of the continuous problems by the trace finite element method.
The implementation details of the method are presented in section 4. 
In section 5, we conduct a rigorous error analysis of the method.
Numerical experiments are illustrated in section 6 to verify the theoretical results and to show the \m{efficiency} of the method.
Some \m{concluding} remarks are given in Section 7.

\section{The model problems}
In this section, we  briefly introduce two model problems corresponding to the \m{Laplace-Beltrami} operators on a generally smooth surface.
The first is a {Laplace-Beltrami} type equation and the second is the corresponding eigenvalue problem.
Although we are mainly interested in the eigenvalue problem, the analysis for the Laplace-Beltrami equation will be the basis for that of the eigenvalue problem.
\subsection{The Laplace-Beltrami equation}
Let $\Gamma$ be a closed smooth surface contained in a domain $\Omega \subset \mathbb{R}^{3}$ and $f\in L^{2}(\Gamma)$. A Laplace-Beltrami type equation with a zeroth-order term on $\Gamma$ is given as
\begin{equation}\label{m0}
  -\Delta_{\Gamma} u +c u =f \quad \text { on } \Gamma.
\end{equation}
Here 
$\Delta_{\Gamma}$ is the Laplace-Beltrami operator\cite{bonito2020finite} {and $c$ is a real constant}.
Denote by $\nabla_{\Gamma}$ the surface gradient operator  and by $H^1(\Gamma)$ the standard Sobolev space defined
on $\Gamma$ \cite{hebey1996sobolev,bonito2020finite}. Denote by $(\cdot,\cdot)_{\Gamma}$ the $L^2$-inner product on $\Gamma$.
The weak form of the problem is to find a function $u \in H^{1}(\Gamma)$ such that
\begin{equation}\label{weak_form}
a(u,v) + (cu,v)_{\Gamma} = ( f,v )_{\Gamma} \quad \text{for all} \quad v \in H^{1}(\Gamma),
\end{equation}
where the bilinear form $a(\cdot,\cdot)$ is
\begin{equation}\label{auvbuv}
  a(u,v):=\int_{\Gamma} \nabla_{\Gamma} u \cdot \nabla_{\Gamma} v \mathrm{d} {s},\qquad
\end{equation}
and
$$(cu,v )_{\Gamma} = \int_{\Gamma} cuv \mathrm{d} {s}, \quad (f,v )_{\Gamma} = \int_{\Gamma} fv \mathrm{d} {s}.$$
When $c>0$, the  Lax-Milgram theorem  implies
the problem (\ref{weak_form}) has a unique solution.
When $c\leq 0$, the wellposedness of the problem is closely related
to the {Laplace-Beltrami} eigenvalue problem described below. In this case,
the problem is sometimes referred as the Helmholtz-Betrami equation \cite{burman2020stable}. 
\subsection{The Laplace-Beltrami eigenvalue problem}
On the closed smooth surface $\Gamma$, the standard Laplace-Beltrami eigenvalue problem is
to find a pair $(\lambda, u) \in\left(\mathbb{R}^{}, H^{2}(\Gamma)\right)$ where $u\neq 0$,  such that
\begin{equation}\label{eig_problem}
-\Delta_{\Gamma} u=\lambda u.
\end{equation}
One can easily verify that the problem has a trivial eigenvalue $0$, and the corresponding eigenfunction is $u\equiv \text{constant}$. All other eigenvalues
are positive.
Furthermore, on a smooth closed compact orientable manifold, the zero eigenvalue of the Laplace-Beltrami operator has multiplicity 1.
Therefore, when the coefficient $c=0$ in \eqref{weak_form}, one needs a consistency condition that $( f, 1)_{\Gamma} =0$
by Fredholm's alternative and the solution of \eqref{weak_form} is unique up to a constant.  Similar arguments can be done for the case $c<0$ where $-c$ is an eigenvalue of the Laplace-Beltrami operator.

Suppose that the eigenvalues of the problem~\eqref{eig_problem} are ordered as
$$0=\lambda_{1} \leq \lambda_{2} \leq \cdots \leq \lambda_{n} \leq \cdots,$$
and the corresponding eigenfunctions $u_{i}$ satisfying $\left\|u_{i}\right\|_{L^{2}(\Gamma)}=1$.
The weak form of (\ref{eig_problem}) is to find a pair $(\lambda, u) \in\left(\mathbb{R}^{}, H^{1}(\Gamma)\right)$ with $\|u\|_{L^{2}(\Gamma)}=1$ such that
\begin{equation}\label{eig_problem_weak}
a(u,v) = \lambda (u,v)_{\Gamma}.
\end{equation}
\m{It is obvious that the eigenfunction $u$ satisfies $\int_\Gamma u = 0$ when $\lambda \neq 0$.}
\subsection{Extensions to a neighbourhood region}
In  this subsection, we consider extensions of the above two problems in a neighbouring bulk region of $\Gamma$ which are intuitive for us to design proper trace finite element methods for the Laplace-Beltrami eigenvalue problem.

Denote by $d(x)$ a signed distance function to $\Gamma$.
Define a narrow band neighbouring region of $\Gamma$ with width $2\delta$ as,
\begin{equation}
\mathcal{N}_\delta=\left\{\mathrm{x} \in \mathbb{R}^{3} \mid \left|d(\mathrm{x})\right|<\delta\right\}.
\end{equation}
Then we introduce a Sobolev space in $\mathcal{N}_\delta$ composed of functions with trace on $\Gamma$ belonging to $H^1(\Gamma)$
as,
\begin{equation}\label{extension_space}
\hat{H} = \left\{ \hat{v} \in H^{\frac{3}{2}}(\mathcal{N}_{\delta})\mid \exists v\in H^1(\Gamma): \hat{v} |_\Gamma = v\right\}.
\end{equation}
Then the (ill-posed) embedded problem corresponding to (\ref{weak_form}) is
to find a function $\hat{u} \in \hat{H}$,
\begin{equation}\label{m0_ex}
  a(\hu,\hv)+(c\hu,\hv)_{\Gamma}=(f,\hv)_{\Gamma}, \quad \forall \hv\in\hat{H}.
\end{equation}
Similarly, the embedded problem corresponding to (\ref{eig_problem}) is to find
a pair $(\lambda, \hu) \in (\mathbb{R}, \hat{H})$ with $\left\| \hu \right\|_{L^2(\mathcal{N}_\delta)}=1$ such that
\begin{equation}\label{eig_problem_ex}
a(\hu,\hv)=\lambda (\hu,\hv)_{\Gamma}, \qquad \forall \hv\in \hat{H}.
\end{equation}

The following results are trivial for the Laplace-Beltrami problem.  For a solution $u$ of \eqref{weak_form}, any extension of $u$ in $\mathcal{N}_\delta$ is a solution of \eqref{m0_ex} whenever
it is in $\hat{H}$. Since the extension of a function $u\in H^1(\Gamma)$ to $\hat{H}$ are not unique,
the problem \eqref{m0_ex} is ill-posed.
Nevertheless, if $\hu \in \hat{H}$ is a  solution of \eqref{m0_ex},
 its trace on $\Gamma$ is always a solution of \eqref{weak_form}. Therefore, once the problem~\eqref{weak_form}
has a unique solution,  all the solutions of \eqref{m0_ex} corresponds to the same trace on $\Gamma$.
This fact is the basis for the original trace finite element method \cite{olshanskii2009finite}.

The relation between the problem \eqref{eig_problem_weak} and the embedded eigenvalue problem (\ref{eig_problem_ex})  is more  tricky.
Firstly, for a solution $(\lambda, u)$ of \eqref{eig_problem_weak}, we could find a function $\hu \in \hat{H}$
such that $\|\hu \|_{L^2(\mathcal{N}_\delta)}=1$ and $\hu|_{\Gamma}=u$. Then $(\lambda, \hu)$ is the solution
of \eqref{eig_problem_ex}. The extension is not unique, anologously to the Laplace-Beltrami equation.
\m{Secondly}, there exists more trouble if one would like to relate the solution of the problem \eqref{eig_problem_ex}
to that of \eqref{eig_problem_weak}. Actually, notice that both the bilinear forms $a(\cdot,\cdot)$ and $(\cdot,\cdot)_{\Gamma}$
 are defined only on  $\Gamma$. If we consider a function $\hu \in \hat{H}$ such that $\|\hu \|_{L^2(\mathcal{N}_\delta)}=1$ and
$\hu|_{\Gamma}\equiv 0$,
 then any $\lambda \in \mathbb{R}$ can be the eigenvalue of \eqref{eig_problem_ex}, while
 it may not be the eigenvalue of \eqref{eig_problem_weak}.  This indicates
  the difficulty of direct application of the standard trace finite element method to the   Laplace-Beltrami eigenvalue problem.

To overcome this degeneracy of the embedded eigenvalue problem,
 we introduce a definition for the ``true eigenvalues"  of \eqref{eig_problem_ex}.
 \begin{mydefinition}[true eigenvalue]
 The true eigenvalues of \eqref{eig_problem_ex} are those corresponding to
 at least one eigenfunction $\hu$, such that $\hu|_{\Gamma}\neq 0$.
 \end{mydefinition}
 With this definition, we can easily see that any true eigenvalue of \eqref{eig_problem_ex}  is also
 an eigenvalue of \eqref{eig_problem_weak}. We will use this fact to design
 a trace finite element method for the  \m{Laplace-Beltrami} eigenvalue problem.

\section{The trace finite element method}
\label{ExTraceFEM}
In this section, we will introduce a trace finite element method proposed on the smooth surface, which is different
from previous versions of the method
\cite{olshanskii2009finite,olshanskii2010finite,grande2014eulerian,burman2015stabilized,
reusken2015analysis,lehrenfeld2016high,grande2016higher,olshanskii2017trace,grande2018analysis,bonito2020finite}.
The implementation and  error analysis of the method will be given in following sections.
\subsection{Notations}
Suppose $\Gamma$ is embedded in a bounded domain $\Omega \subset \mathbb{R}^3$.
Let $\mathcal{T}_h$ be a shape-regular tetrahedral partition of $\Omega$. Denote by $h_T$ the diameter of an element $T \in \mathcal T_h$ and $h:=\max_{T \in \mathcal T_h} h_T$. The intersection of an element $T$  with $\Gamma$ is  $F_T :={T \cap \Gamma}$. Define
\begin{equation}
  \mathcal T_h^\Gamma := \{T \in \mathcal T_h\ |\ \hbox{meas}_2(F_T)\neq 0 \}.
\end{equation}
Here we assume $F_T$ is in the interior of $T$ for simplicity. Otherwise, if $F_T$ is the boundary of two neighbouring tetrahedrals, we keep only one of them in $\mathcal{T}_h^\Gamma$.

 The simplexes intersecting with $\Gamma$  form a tubular region,
 \begin{equation}\omega_{h}:=\cup_{T \in \mathcal{T}_h^{\Gamma}} {T}\end{equation}
 For a fixed $\delta$, when $h$ is small enough, we have $\omega_h \subset \mathcal{N}_{\delta}$.
For any $T \in \mathcal T_h$, $P_k(T)$ stands for the set of $k$-th order polynomials on $T$. The standard $k$-th order Lagrangian finite
element \m{space} on $\omega_h$ is defined as
\begin{equation}\label{Wh}
W_h:=\left\{\hat{v}_h \in C(\omega_h)\ \big|\ \hat{v}_h |_{T} \in P_{k}(T), \forall T \in \mathcal{T}_h^\Gamma\right\}.
\end{equation}
The traces on $\Gamma$ of the functions in $W_h$ form a finite dimensional linear space
\begin{equation}\label{Pk_Gamma}
V_h: = \left\{v_h \in C\left(\Gamma\right)\ \big| \ \exists \hat{v}_h \in W_h \  s.t. \ v_h =\hat{v}_h |_{\Gamma_{}}\right\}.
\end{equation}
One can easily see that $V_h$ is a subspace of $H^1(\Gamma)$. However, the function in $V_h$ may not
be a polynomial in parametric coordinates for  a general curved surface $\Gamma$.

We define the following extensions of a function $v_h\in V_h$ with reference to $\omega_h$, that is
\begin{equation}\label{extension_space_h}
\operatorname{E}(v_h) = \left\{ \hv_h \in W_h  \mid \hv_h |_\Gamma = v_h\right\}.
\end{equation}
Accordingly, we define a restriction operator, for any $ \hv_h \in \operatorname{E}(v_h)$,
\begin{equation}\label{restriction_space_h}
\operatorname{R}( \hv_h) = v_h.
\end{equation}

\subsection{The trace finite element method}
The standard Galerkin approximation of the Laplace-Beltrami equation (\ref{weak_form}) reads:  to find
$u_{h} \in  V_{h}$ such that
\begin{equation}\label{weak_form_apxm}
a(u_h,v_h) +(c u_h, v_h)_{\Gamma} = ( f,v_h )_{\Gamma}, \quad \text{for all} \quad v_h \in  V_{h}.
\end{equation}
Similar to the continuous problem~\eqref{weak_form}, the well-posed of
the problem can be proved by using the Lax-Milgram theorem when $c>0$.

Notice the problem~\eqref{weak_form_apxm} cannot be implemented directly since
the basis of $V_h$ is not known explicitly. In practice, we actually solve the following problem:
to  find a function \m{$\hat{u}_h \in W_h$} such that
\begin{equation}\label{weak_form_apxm_ex}
a(\hu_h,\hv_h) +(c \hu_h, \hv_h)_{\Gamma} = ( f, \hv_h )_{\Gamma}, \quad \text{for all} \quad \hv_h \in  W_{h}.
\end{equation}
It can be seen as an approximation of an embedded problem \eqref{m0_ex} which is defined in a  subdomain $\omega_h$ instead of $\mathcal{N}_{\delta}$.
Similar to the continuous problems, we know that
if $u_h$ is a solution of (\ref{weak_form_apxm}), then any $\hat{u}_h \in \operatorname E(u_h)$ is
a solution of (\ref{weak_form_apxm_ex}). If $\hu_h$ is a solution of (\ref{weak_form_apxm_ex}), we also know that
$\operatorname R(\hu_h)$ must be a solution of (\ref{weak_form_apxm}).

For the eigenvalue problem (\ref{eig_problem_weak}), its Galerkin approximation on $V_h$ is to
find  pairs $\left(\lambda_{h}^{}, u_{h}^{}\right) \in \left(\mathbb{R}, V_{h}^{}\right)$ with $\left\|u_{h}^{} \right\|_{L^2(\Gamma)}=1$ such that
\begin{equation}\label{eig_problem_discrete}
a(u_h,v_h) = \lambda_h (u_h,v_h)_{\Gamma} \quad \text{for all} \quad v_h \in  V_{h}.
\end{equation}
Once again, the problem cannot be implemented directly. In  practise, we will solve an alternative problem as follows.
Find pairs $\left(\lambda_{h}^{}, \hu_{h}^{}\right) \in \left(\mathbb{R}, W_{h}^{}\right)$ with {$\left\|\hu_{h}^{} \right\|_{L^2(\omega_h)}=1$} such that
\begin{equation}\label{eig_problem_discrete_ex}
a(\hu_h,\hv_h) = \lambda_h (\hu_h,\hv_h)_{\Gamma} \quad \text{for all} \quad \hv_h \in  W_{h}.
\end{equation}
It can be seen as a discrete form of the extension problem (\ref{eig_problem_ex}) in a subdomain $\omega_h$ of $\mathcal{N}_{\delta}$.
Similar to the continuous Laplace-Beltrami eigenvalue problems, the relation between \eqref{eig_problem_discrete} and \eqref{eig_problem_discrete_ex} is tricky and  will be clarified below.


\subsection{Analysis of the discrete embedded problems}\label{discrete_embedded}
 In general the embedded problems (\ref{weak_form_apxm_ex}) and (\ref{eig_problem_discrete}) may not be well-posed, that is similar to the continuous problems. However, since the extensions presented in $\operatorname{E}(v_h)$ (for a function $v_h\in V_h$) is not arbitrary, it is possible
  that the embedded problems are well defined under some conditions.
To show this, we first define the kernel space of the Laplace-Beltrami operator in $W_h$ as follows.
\begin{mydefinition}[Discrete kernel space]
\begin{equation}\label{ker}
  \operatorname{Ker}_h(\Delta_{\Gamma}):=\left\{\hat{w}_h \in W_h \mid a(\hat{w}_h,\hv_h)=0,\forall \hv_h \in W_h \right\}.
\end{equation}
\end{mydefinition}
We have the following lemma.
\begin{mylemma}\label{con_wellpose}
The following conditions are equivalent.
\begin{enumerate}
\item[(i).]   $  \operatorname{Ker}_h(\Delta_{\Gamma})= \mathrm{span}\left\{1\right\}$.
\item[(ii)]  $\operatorname{E}({0}) = \{0\}$.
\item[(iii)]
$\Gamma$ {is not a part of} the zero level set of any non-zero finite element function in $W_h$.
\item[(iv)]  $\operatorname{dim} (W_h)  =  \operatorname{dim} (V_h)$.
\end{enumerate}
\end{mylemma}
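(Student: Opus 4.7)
The plan is to collapse the four-way equivalence to a single non-trivial step by first disposing of the near-tautological parts (ii) $\Leftrightarrow$ (iii) and (ii) $\Leftrightarrow$ (iv). The first of these is a pure rewording: by the definition \eqref{extension_space_h} of $\operatorname{E}(0)$, a function $\hat v_h \in W_h$ lies in $\operatorname{E}(0)$ iff $\hat v_h|_\Gamma \equiv 0$, and the latter is exactly the statement that $\Gamma$ is contained in the zero level set of $\hat v_h$. For (ii) $\Leftrightarrow$ (iv) I would apply the rank--nullity theorem to the restriction map $\operatorname{R}:W_h \to V_h$ from \eqref{restriction_space_h}; this map is linear and, by the definition \eqref{Pk_Gamma} of $V_h$, surjective, while its kernel is precisely $\operatorname{E}(0)$, so
\[
\dim W_h \;=\; \dim V_h + \dim \operatorname{E}(0),
\]
whence (iv) is equivalent to $\operatorname{E}(0)=\{0\}$.

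The substantive step is (i) $\Leftrightarrow$ (ii), and I would derive it from the structural identity
\[
\operatorname{Ker}_h(\Delta_\Gamma) \;=\; \operatorname{E}(0) \,+\, \mathrm{span}\{1\}.
\]
The inclusion $\supseteq$ is routine: constants satisfy $\nabla_\Gamma 1 = 0$, and any $\hat w_h$ with $\hat w_h|_\Gamma = 0$ also has $\nabla_\Gamma \hat w_h = 0$ on $\Gamma$ because the surface gradient depends only on the trace of its argument, so in either case $a(\hat w_h,\hat v_h)=0$ for every $\hat v_h \in W_h$. For the opposite inclusion, the key trick is to test with $\hat v_h = \hat w_h$ itself, which yields $\|\nabla_\Gamma \hat w_h\|_{L^2(\Gamma)}^2 = a(\hat w_h,\hat w_h) = 0$ and hence $\nabla_\Gamma \hat w_h \equiv 0$ on $\Gamma$; connectedness of the closed surface $\Gamma$ then forces $\hat w_h|_\Gamma \equiv c$ for some constant $c$, so $\hat w_h - c \in \operatorname{E}(0)$. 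With this identity in hand, since $\operatorname{E}(0) \cap \mathrm{span}\{1\} = \{0\}$ (the only constant with zero trace is zero), the collapse $\operatorname{Ker}_h(\Delta_\Gamma) = \mathrm{span}\{1\}$ in (i) is equivalent to $\operatorname{E}(0) = \{0\}$ in (ii).

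I expect the main conceptual hurdle to be precisely the reverse inclusion above, and specifically the appeal to connectedness of $\Gamma$: without it the kernel would additionally contain the indicator of each connected component, and the clean collapse to $\mathrm{span}\{1\}$ in (i) would fail. A minor book-keeping point along the way is to check that the constant $c$ genuinely lies in $W_h$ so that $\hat w_h - c$ remains a finite element function, but this is automatic for any Lagrangian space of degree $k \ge 1$ on a conforming partition of $\omega_h$.
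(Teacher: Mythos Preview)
Your proof is correct and uses the same substantive ingredients as the paper's: the self-test $a(\hat w_h,\hat w_h)=0$ to force a constant trace, and the rank--nullity relation between $W_h$ and $V_h$ via the restriction map. The paper organises these into a cycle of contrapositives $(i)\Rightarrow(ii)\Rightarrow(iii)\Rightarrow(iv)\Rightarrow(i)$, whereas you isolate the structural identity $\operatorname{Ker}_h(\Delta_\Gamma)=\operatorname{E}(0)\oplus\mathrm{span}\{1\}$ and read all equivalences off from it; this is a cleaner packaging and also makes explicit the reliance on connectedness of $\Gamma$, which the paper leaves implicit.
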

\begin{proof}
   { We prove the lemma by the method of contradiction.}

  $(i)\Rightarrow (ii)$:
  \m{If there exists a $\hv_h \in W_h$, $0\neq \hv_h\in \operatorname E(0)$, then $\hv_h\in \operatorname{Ker}_h(\Delta_{\Gamma})$ but $\hv_h \notin \mathrm{span}\left\{1\right\}$, which contradicts (i).}

  $(ii)\Rightarrow (iii)$:
  If there is a non-zero function $\hat w_h \in W_h$ such that \m{$\Gamma \subseteq \{x|\hat w_h(x)=0\}$}, then for any $ x \in \Gamma, \hat w_h(x)=0$,
  which means $ \hat w_h|_\Gamma = 0$ so that $\hat w_h \in \operatorname E(0)$. This is contradictory to (ii).

  $(iii)\Rightarrow (iv)$: We easily know that $\operatorname{dim} V_h \leq \operatorname{dim} W_h$. We need only to show that
  the inequality does not hold.
  Assuming that $\{\phi_i\}$ is a set of basis functions of $W_h$ space. If $\operatorname{dim}W_h>\operatorname{dim}V_h$,
  then there is $\mathbf{\alpha} = \{\alpha_i \}\neq \mathbf{0}$ such that $\sum\alpha_i \phi_i |_\Gamma = 0$. Let $\hat w_h = \sum \alpha_i \phi_i \in W_h$, it is easy to know that $\hat w_h\neq 0 $ and \m{its zero level set includes $\Gamma$}, which contradicts (iii).

  $(iv)\Rightarrow (i)$:  Let $\{\phi_i\}$ are the standard Lagrangian finite element basis in $W_h$.
  If $\operatorname{Ker}_h(\Delta_{\Gamma})\neq \text{span}\{1\}$, then there exists a non-constant function $\hat w_h=\sum \alpha_i \phi_i,
  \mathbf{\alpha} = \{\alpha_i\}\notin \text{span}\{\mathbf{1}\}$ such that $a(\hat w_h,\hat v_h)= 0$ for all \m{$\hv_h \in W_h$}. Here $\mathbf{1}$ represents
  a vector with every component equal to $1$.
  Choose $\hv_h=\hat w_h$ in the equation, we easily know that
  $\hat{w}_h=const$ on $\Gamma$. Denote the constant as $c_0$. We can get $\operatorname R (\hat w_h-c_0) = \sum (\alpha_i-c_0) \phi_i|_{\Gamma} = 0$, which implies that $\{\phi_i |_\Gamma\}$ is linearly dependent and $\operatorname{dim} V_h <\operatorname{dim} W_h$.
\end{proof}

Using the lemma, we easily have the following result on the existence of the solutions of the embedded problems.
\begin{myproposition}
  Under the conditions of Lemma \ref{con_wellpose}, the following facts hold:
  \begin{enumerate}
\item[(i).]  There exists a unique solution for the problem (\ref{weak_form_apxm_ex}) when $c>0$.
\item[(ii).] The problems \eqref{eig_problem_discrete} and \eqref{eig_problem_discrete_ex} have the same eigenvalues.
  \end{enumerate}
\end{myproposition}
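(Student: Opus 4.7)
The plan is to exploit the fact that the bilinear forms $a(\cdot,\cdot)$ and $(\cdot,\cdot)_\Gamma$ in the embedded problems depend only on the traces on $\Gamma$, and then to use condition (ii) of Lemma~\ref{con_wellpose}, namely $\operatorname{E}(0)=\{0\}$, as a trace-to-$W_h$ injectivity statement that lets us move cleanly between $V_h$ and $W_h$. Since $W_h$ is finite-dimensional, uniqueness will be enough to deduce existence in (i), and the spectral correspondence in (ii) will reduce to a bidirectional argument based on trivial identifications of the bilinear forms under restriction/extension.

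For (i) I would observe that the homogeneous version of \eqref{weak_form_apxm_ex} ($f=0$) tested against $\hv_h=\hu_h$ yields $\int_\Gamma |\nabla_\Gamma \hu_h|^2\,\mathrm{d}s + c\int_\Gamma \hu_h^2\,\mathrm{d}s = 0$. With $c>0$ both terms vanish, forcing $\hu_h|_\Gamma=0$, i.e.\ $\hu_h\in \operatorname{E}(0)$; Lemma~\ref{con_wellpose}(ii) then gives $\hu_h=0$ in $W_h$, so the associated square linear system is injective, and by finite dimensionality uniquely solvable. For (ii) I would argue both inclusions of eigenvalues. Forward: given an eigenpair $(\lambda_h,u_h)$ of \eqref{eig_problem_discrete}, pick any $\hu_h\in \operatorname{E}(u_h)$; since for every $\hv_h\in W_h$ we have $a(\hu_h,\hv_h)=a(u_h,\operatorname{R}\hv_h)$ and $(\hu_h,\hv_h)_\Gamma=(u_h,\operatorname{R}\hv_h)_\Gamma$, a rescaling of $\hu_h$ to unit $L^2(\omega_h)$-norm exhibits $\lambda_h$ as an eigenvalue of \eqref{eig_problem_discrete_ex}. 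Backward: given $(\lambda_h,\hu_h)$ solving \eqref{eig_problem_discrete_ex}, set $u_h=\operatorname{R}(\hu_h)\in V_h$ and, for any $v_h\in V_h$, test against any $\hv_h\in\operatorname{E}(v_h)$ to get $a(u_h,v_h)=\lambda_h(u_h,v_h)_\Gamma$.

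The only subtle point, and what I expect to be the main obstacle, is showing that $u_h=\operatorname{R}(\hu_h)$ in the backward direction is not identically zero; otherwise $\lambda_h$ would be a spurious eigenvalue of the embedded problem rather than a genuine eigenvalue of \eqref{eig_problem_discrete}. This is precisely where the hypothesis is essential: if $u_h=\hu_h|_\Gamma=0$ then $\hu_h\in \operatorname{E}(0)$, and Lemma~\ref{con_wellpose}(ii) forces $\hu_h\equiv 0$ in $W_h$, contradicting the normalization $\|\hu_h\|_{L^2(\omega_h)}=1$. After renormalizing $u_h$ to unit $L^2(\Gamma)$-norm we recover an eigenpair of \eqref{eig_problem_discrete} with the same eigenvalue $\lambda_h$, completing the spectral identification.
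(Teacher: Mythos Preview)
Your proposal is correct. The paper does not actually give a proof of this proposition; it simply states it after Lemma~\ref{con_wellpose} with the remark ``Using the lemma, we easily have the following result,'' so your argument is precisely the natural fleshing-out of what the authors leave implicit, via the trace-only dependence of the bilinear forms together with condition (ii) of Lemma~\ref{con_wellpose}.
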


When the conditions of Lemma~\ref{con_wellpose} do not hold, we know that $\operatorname{dim} V_h <\operatorname{dim} W_h$.
We can choose a non-zero function $\hat w_h\in W_h$, such that $\operatorname R(\hat w_h)=0$ on $\Gamma$.
Similar to the continuous problem, we have $a(\hat{w}_h,v_h)=(\hat{w}_h,v_h)_{\Gamma}=0$, for all $v_h\in W_h$. Then
for the embedded problem~\eqref{eig_problem_discrete_ex}, any number $\lambda\in \mathbb{R}$ is an eigenvalue.
Since we are interested only in the eigenvalues which are the same as that of \eqref{eig_problem_discrete}, we
introduce the following definition for  ``true eigenvalues".
 \begin{mydefinition}The true eigenvalues of \eqref{eig_problem_discrete_ex} are those corresponding to
 at least one eigenfunction $\hu_h$, such that $\hu_h|_{\Gamma}\neq 0$.
 \end{mydefinition}
 We will show how to compute the true eigenvalues of \eqref{eig_problem_discrete_ex} in next section.

\section{Implementation of the method}
In this section, we describe the numerical implementation on solving the discrete problems~\eqref{weak_form_apxm_ex} and \eqref{eig_problem_discrete_ex}.
We  discuss mainly two issues. The first is to do numerical integration on a smooth manifold $\Gamma$ or piecewisely on $F_{T}=T\cap\Gamma$
for each $T\in\mathcal{T}_{h}^{\Gamma}$. The second is the algebraic problem to compute the true eigenvalues of~\eqref{eig_problem_discrete_ex}.
\subsection{Quadrature on curved surfaces}
\label{high-order numerical quadrature}
Let $T \in \mathcal{T}_h$ be a tetrahedron in $\Omega \subset \mathbb{R}^3$, and $\Gamma$ be a smooth surface intersecting $T$, as shown in Figure \ref{localSurface}. $\Gamma$ is given implicitly by a level set function $\Gamma=\{\mathrm{x} \in \Omega \mid \varphi(\mathrm{x})=0\}$. Suppose $u(x):\Gamma \rightarrow \mathbb{R}$ is a continuous function. We use the method proposed in \cite{cui2020high} to
numerically calculate
\begin{equation}I=\int_{F_T} u({x}) \mathrm{d} S.\end{equation}
\begin{figure}[H]
  \centering
  \includegraphics[width=6cm]{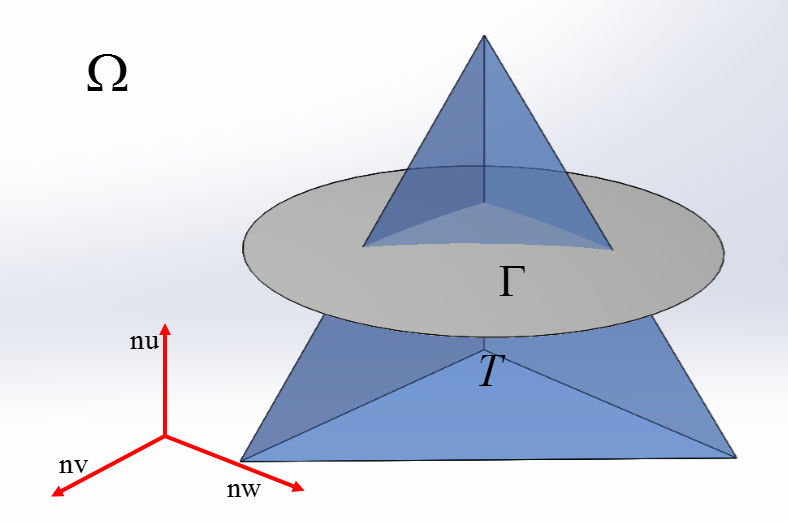}\\
  \caption{Intersection of surface and tetrahedron}\label{localSurface}
\end{figure}
We describe the method briefly as follows. As shown in Figure~\ref{localSurface}, suppose we choose an appropriate rectangular coordinate system $\{x_0, \{{\boldsymbol {\mathrm{nu}},\boldsymbol {\mathrm{nv}},\boldsymbol {\mathrm{nw}}}\}\}$ so that $T$ is contained in a rectangular parallelepiped unit, i.e.,
\begin{equation}
  T \subset\left\{\mathrm{x}_{0}+r \mathbf{nu}+s \mathbf{nv}+t \mathbf{nw} \mid r \in(0, a), s \in(0, b), t \in(0, c)\right\}.
\end{equation}
Then we directly use a projection method to calculate the surface integral in
the parametric domain, i.e.,
\begin{equation}\label{int_u}
I =\int_{F_T} u(\mathrm{x}) \mathrm{d} S=\int_{0}^{c} \int_{0}^{b} \tilde{g}(s, t) \mathrm{d} s \mathrm{d} t=\int_{0}^{c} \m{\tilde{h}(t) \mathrm{d}t,} \end{equation}
where
\begin{equation}\begin{array}{l}
\tilde{g}(s, t):=\left\{\begin{array}{ll}
u\left(r_{0}, s, t\right) & \frac{\left|\nabla \varphi \left(\mathrm{x}\left(r_{0}, s, t\right)\right)\right|}{\left|\mathbf{n} \mathbf{u} \cdot \nabla \varphi \left(\mathrm{x}\left(r_{0}, s, t\right)\right)\right|}, \quad \text { if } \exists r_{0}, \text { s.t. } \mathrm{x}\left(r_{0}, s, t\right) \in F_T,\\
0, & \text { otherwise;}
\end{array}\right. \\
\tilde{h}(t):=\int_{0}^{b} \tilde{g}(s, t) \mathrm{d} s.
\end{array}\end{equation}
\m{
Although the approach looks straightforward, there are difficulties in implementation. First, the integrand is discontinuous and we need search for the discontinuity points
in each interval of integration. Second, the integrand might be singular if the normal
$\mathbf{nu}$ is not chosen properly so that it is tangential to $F_T$ at some point.
In this case, we need detect the singular points in the integration and change coordinates to avoid them. More details are referred to \cite{cui2020high}.
}
Other approaches for high order
quadratures on curved surfaces can be found  in \cite{muller2013highly,saye2015high}.

\subsection{The algebraic problem}
Suppose the finite element basis functions of $W_h$ are given by $\{\phi_1, \phi_2, \cdots, \phi_M\}$.
We express the solution of (\ref{weak_form_apxm_ex}) 
as $\hu_h = \sum_{i=1}^{M} u_i \phi_i$.
Let $\mathbf{u} = (u_1 ,u_2,\cdots,u_M)$.
When $c$ is a constant, the algebraic system of the discrete Laplace-Beltrami problem (\ref{weak_form_apxm_ex}) can be written as,
\begin{equation}\label{ABx}
  (A+ c B)\mathbf{u}=\mathbf{f},
\end{equation}
where the elements of the matrices $A$ and $B$ are given by
\begin{equation*}
a_{ij}=a(\phi_i,\phi_j), \quad
b_{ij}=(\phi_i,\phi_j)_{\Gamma}, \qquad i, j=1, \cdots,M,
\end{equation*}
and $\mathbf{f} = \{\left(f,\phi_i\right)_{\Gamma}, i=1,\cdots,M\}$. Similarly, the algebraic problem corresponding to (\ref{eig_problem_discrete_ex}) can be written as,
\begin{equation}\label{AxLBx}
  A\mathbf{u}=\lambda B \mathbf{u}.
\end{equation}

When the conditions of Lemma \ref{con_wellpose} are not satisfied, neither the algebraic problem (\ref{ABx}) nor the generalized eigenvalue problem (\ref{AxLBx}) are well defined.
For the algebraic problems, we introduce a new equivalent condition of Lemma \ref{con_wellpose}.
\begin{mylemma}\label{con_wellpose_matrix}
  The condition of Lemma \ref{con_wellpose} holds if and only if $B$ is nonsingular.
\end{mylemma}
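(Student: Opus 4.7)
The plan is to translate the matrix condition ``$B$ nonsingular'' into a statement about functions in $W_h$ and then match it with one of the four equivalent conditions in Lemma~\ref{con_wellpose}. The natural bridge is the observation that $B$ is the Gram matrix of the basis $\{\phi_i\}$ in the $L^2(\Gamma)$ inner product, so it is symmetric positive semi-definite. For any vector $\mathbf{v}=(v_1,\dots,v_M)^T$ with associated finite element function $\hv_h = \sum_i v_i \phi_i \in W_h$, we have
\begin{equation*}
\mathbf{v}^T B \mathbf{v} \;=\; \sum_{i,j} v_i v_j (\phi_i,\phi_j)_\Gamma \;=\; (\hv_h,\hv_h)_\Gamma \;=\; \|\hv_h\|_{L^2(\Gamma)}^2 .
\end{equation*}

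First I would use semi-definiteness to argue that $B$ is singular if and only if there exists $\mathbf{v}\neq 0$ with $\mathbf{v}^T B \mathbf{v} = 0$. By the identity above, this is equivalent to the existence of a nonzero $\hv_h \in W_h$ with $\|\hv_h\|_{L^2(\Gamma)} = 0$, i.e.\ $\hv_h|_\Gamma \equiv 0$ (continuity of $\hv_h$ upgrades the almost-everywhere vanishing to everywhere). In the notation of the paper this says exactly that $\operatorname{E}(0)\neq\{0\}$, which is the negation of condition (ii) in Lemma~\ref{con_wellpose}.

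Second, I would invoke Lemma~\ref{con_wellpose} to conclude: $B$ is singular iff (ii) fails iff all of (i)--(iv) fail. Contrapositively, $B$ is nonsingular iff the (equivalent) conditions of Lemma~\ref{con_wellpose} hold, which is exactly the claim.

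I do not anticipate a serious obstacle here; the only subtlety worth flagging in the write-up is the step ``$\hv_h|_\Gamma = 0$ in $L^2(\Gamma)$ implies $\hv_h|_\Gamma \equiv 0$ pointwise,'' which is immediate from $\hv_h \in C(\omega_h)$ built into the definition \eqref{Wh} of $W_h$. The rest is a one-line matrix-to-function dictionary applied to a quadratic form.
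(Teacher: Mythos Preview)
Your argument is correct. Both you and the paper exploit that $B$ is the Gram matrix of the traces $\{\phi_i|_\Gamma\}$ in $L^2(\Gamma)$; the only difference is which of the four equivalent conditions you land on. The paper goes via condition~(iv), observing that $\operatorname{rank}(B)=\dim V_h$ so that $B$ is nonsingular iff $\dim W_h=\dim V_h$, whereas you go via condition~(ii), using positive semi-definiteness and the quadratic form $\mathbf v^T B\mathbf v=\|\hv_h\|_{L^2(\Gamma)}^2$ to show $B$ is singular iff $\operatorname{E}(0)\neq\{0\}$. Both routes are one line of linear algebra once the Gram-matrix identification is made; your version has the small advantage of making the continuity subtlety (upgrading $L^2$-vanishing to pointwise vanishing) explicit, while the paper's rank argument sidesteps that issue entirely.
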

\begin{proof}
  Notice the equivalence between the condition (iv) in Lemma \ref{con_wellpose} and the definition of $B$, the conclusion can be drawn immediately.
\end{proof}

When the condition of Lemma \ref{con_wellpose_matrix} holds, we easily see that
$\operatorname{rank}(B) = \operatorname{dim}(W_h) = \operatorname{rank}(A)+1=\m{\operatorname{rank}(A+cB)}$ when $c$ is not an
 eigenvalue of \eqref{AxLBx}.
In this case, the algebra problem $(A+cB)\mathbf{u}=\mathbf{f}$ has a unique solution.
Meanwhile, since the matrix $B$ is invertible,  $A\mathbf{u}=\lambda B \mathbf{u}$ can be understand in a usual way,
  \begin{equation}\label{invBAx}
    B^{-1}A\mathbf{u} = \lambda \mathbf{u}.
  \end{equation}

When the condition of Lemma \ref{con_wellpose_matrix} is not satisfied,
the algebra problem $(A+cB)\mathbf{u}=\mathbf{f}$ may have multiple solutions
even when $c>0$. However,  all of the solutions correspond to the same trace \m{$\sum_{i=1}^{M}u_i\phi_i |_\Gamma$} on $\Gamma$. Thus we can still obtain an approximate solution for the Laplace-Beltrami equation
whenever we find a solution of~\eqref{ABx}.
When $B$ is singular, there is more trouble for the eigenvalue problem, as discussed in Section~\ref{discrete_embedded}.
The problem $A\mathbf{u}=\lambda B \mathbf{u}$ is not well defined since both $A$ and $B$ are singular.
One can show that there exists a non-zero vector $\mathbf{u}$ such that  $A\mathbf{u}=\mathbf{0}=B\mathbf{u}$. In this case,
any $\lambda\in \mathbb{R}$ satisfies the equation.
\m{Moreover, we cannot get the multiplicity of a specific eigenvalue correctly.}
To focus on the eigenvalues we are interested in, we need study the ``true eigenvalues" of the generalized eigenvalue problem~\eqref{AxLBx}.

We  recall some standard definitions for the {\it finite eigenvalues} of singular generalized eigenvalue problems(which means that $\mathrm{det}(A-\lambda B) \equiv 0$ for all $\lambda\in \mathbb{C}$) \cite{Demmel2000}.
We first introduce a definition for the normal rank of  two matrices $A$ and $B$.
\begin{mydefinition}[normal rank] For any two matrices $A$ and $B$, the normal rank $\operatorname{nrank}(A, B)$ is defined as
\begin{equation}
\operatorname{nrank}(A, B):=\max _{\beta \in \mathbb{C}} \operatorname{rank}(A-\beta B)
\end{equation}
\end{mydefinition}
\begin{mydefinition}[finite eigenvalues]
A number $\lambda_k\in \mathbb{C}$ satisfying
\begin{equation}
\operatorname{rank}\left(A-\lambda_{k} B\right)<\operatorname{nrank}(A, B)
\end{equation}
is called a finite eigenvalue of the  generalized eigenvalue problem (\ref{AxLBx}).
\end{mydefinition}
Notice that  both $A$ and $B$ in (\ref{AxLBx}) are semi-positive symmetric matrices in our problem, we can consider only the finite eigenvalues in $\mathbb{R}$.
In addition, when $B$ is invertible, we have $\operatorname{nrank}(A, B)=\operatorname{rank}(B)$ and  the above defined finite eigenvalues coincide with the standard eigenvalues of \eqref{invBAx}.

%

We will show that the finite eigenvalues of  (\ref{AxLBx}) are exactly the same as the true \m{eigenvalues of \eqref{eig_problem_discrete_ex}.}
\m{Furthermore, considering the multiplicity of eigenvalues, they are exactly  the eigenvalues of \eqref{eig_problem_discrete}.}

Let us assume that $\{\phi_1|_\Gamma,\phi_2|_\Gamma, \cdots ,\phi_{M-K}|_\Gamma\}$ is a set of linearly independent basis functions of $V_h$,
where $K = \operatorname{dim} W_h - \operatorname{dim} V_h$.
We define the stiffness matrix and mass matrix in $\mathbb{R}^{(M-K)\times(M-K)}$ \m{as}
\begin{equation}
  \tilde A=\big(a(\phi_i,\phi_j)\big),\quad \tilde B=\big( (\phi_i,\phi_j)_{\Gamma}\big), \quad 1\leq i,j\leq M-K.
\end{equation}
Then, the algebraic eigenvalue problem corresponding to \eqref{eig_problem_discrete} is
\begin{equation}\label{true algebraic eigenvalue problem}
\tilde A \mathbf{x} = \lambda \tilde B \mathbf{x}.
\end{equation}
It is easy to see that the eigenvalues of this problem are the true eigenvalues of \eqref{eig_problem_discrete_ex}.
\m{Further, it is a well-posed approximation of the problem \eqref{eig_problem} while considering the multiplicity of eigenvalues as introduced in \cite{glowinski2008computing}.}
The following theorem gives the relation between the regular generalized eigenvalue problem (\ref{true algebraic eigenvalue problem})
and the singular generalized eigenvalue problem (\ref{AxLBx}).
\begin{mytheorem}
  Suppose that $\{\phi_1, \cdots, \phi_{M-K-1}, \phi_{M-K}, \cdots,\phi_{M}\}$ forms a basis of the finite element space $W_h$ and
 $\{\phi_1|_{\Gamma},\cdots,\phi_{M-K}|_{\Gamma}\}$ forms a basis of $V_h$.
Then the generalized eigenvalue problem (\ref{AxLBx}) and (\ref{true algebraic eigenvalue problem}) share the same finite eigenvalues.
\end{mytheorem}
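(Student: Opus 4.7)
The plan is to exhibit an explicit factorization that ties the singular $M\times M$ pencil $(A,B)$ to the regular $(M-K)\times(M-K)$ pencil $(\tilde A,\tilde B)$, and then read off equality of finite eigenvalues from ranks.

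First, I would use the hypothesis that $\{\phi_1|_\Gamma,\dots,\phi_{M-K}|_\Gamma\}$ is a basis of $V_h$ to expand each of the remaining traces: for $j=M-K+1,\dots,M$ there exist unique scalars $c_{ij}$ with
\[
\phi_j|_\Gamma = \sum_{i=1}^{M-K} c_{ij}\,\phi_i|_\Gamma.
\]
Collect these coefficients into an $(M-K)\times K$ matrix $C$ and set $P := [\,I_{M-K}\;\;C\,]\in\mathbb{R}^{(M-K)\times M}$, which has full row rank $M-K$.

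Next, since the surface gradient and the $L^2(\Gamma)$ inner product depend only on traces on $\Gamma$, the same linear relation propagates to $\nabla_\Gamma\phi_j$ on $\Gamma$. A direct block computation then yields the symmetric factorizations
\[
A = P^{T}\tilde A P,\qquad B = P^{T}\tilde B P,
\]
and hence $A-\lambda B = P^{T}(\tilde A-\lambda\tilde B)P$ for every $\lambda\in\mathbb{C}$. Because $P$ has full row rank and $P^{T}$ full column rank, multiplication by $P$ and $P^{T}$ preserves rank, so
\[
\operatorname{rank}(A-\lambda B) = \operatorname{rank}(\tilde A-\lambda\tilde B)\qquad\text{for all }\lambda\in\mathbb{C}.
\]

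From here the conclusion is almost automatic. The matrix $\tilde B$ is the Gram matrix of a basis of $V_h$ and is therefore symmetric positive definite, so $\tilde A-\lambda\tilde B$ has rank $M-K$ for all but finitely many $\lambda$; this gives $\operatorname{nrank}(A,B)=M-K$. A number $\lambda_k$ is a finite eigenvalue of $(A,B)$ iff $\operatorname{rank}(A-\lambda_k B)<M-K$, which by the rank identity is equivalent to $\tilde A-\lambda_k\tilde B$ being singular, i.e.\ to $\lambda_k$ being an eigenvalue of the regular problem (\ref{true algebraic eigenvalue problem}). I expect the only subtle step to be justifying that $A$ inherits the factorization $P^{T}\tilde A P$: one must use that $\nabla_\Gamma\phi_j$ is determined by $\phi_j|_\Gamma$, so the trace identity lifts from $V_h$ to the bilinear form $a(\cdot,\cdot)$ even though the underlying bulk functions $\phi_j$ are not themselves linear combinations of $\phi_1,\dots,\phi_{M-K}$.
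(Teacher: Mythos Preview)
Your argument is correct and follows the same overall strategy as the paper: exploit the linear dependence of the traces $\phi_{M-K+1}|_\Gamma,\dots,\phi_M|_\Gamma$ on the first $M-K$ to show that $\operatorname{rank}(A-\lambda B)=\operatorname{rank}(\tilde A-\lambda\tilde B)$ for all $\lambda$, and then conclude via the normal rank. The difference is only in the matrix manipulation: the paper conjugates by the invertible $M\times M$ matrix
\[
Q=\begin{pmatrix} I_{M-K} & 0\\ -C & I_K\end{pmatrix}
\]
to obtain $QAQ^{T}=\operatorname{diag}(\tilde A,0)$ and $QBQ^{T}=\operatorname{diag}(\tilde B,0)$, whereas you factor directly as $A=P^{T}\tilde A P$, $B=P^{T}\tilde B P$ with the rectangular full-row-rank matrix $P=[I_{M-K}\;\;C]$. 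These are two sides of the same coin (indeed $Q^{-1}=\begin{pmatrix} I & 0\\ C^{T} & I\end{pmatrix}$ and the block-diagonal form is just $P^{T}\tilde A P$ rewritten), and your version is arguably a touch more direct since the rank identity follows immediately from $P$ having a right inverse. Your remark that $a(\cdot,\cdot)$ depends only on traces is exactly the point the paper uses implicitly when asserting the off-diagonal blocks vanish after conjugation.
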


\begin{proof}
By the assumption of the theorem, we easily know that there exist a matrix $C:=\{c_{i,j},i=1,\cdots,K,j=1,\cdots,M-K$\}, 
satisfying
\begin{equation}\label{basis_dependence}
\left(\begin{array}{c}
\phi_{M-K+1}|_{\Gamma} \\
\phi_{M-K+2}|_{\Gamma} \\
\vdots \\
\phi_{M}|_{\Gamma}
\end{array}\right)=\left(\begin{array}{cccc}
c_{1,1} & c_{1,2} & \cdots & c_{1, M-K} \\
c_{2,1} & c_{2,2} & \cdots & c_{2, M-K} \\
\vdots & \vdots & \ddots & \vdots \\
c_{K, 1} & c_{\mathrm{K}, 2} & \cdots & c_{K, \mathrm{~M-K}}
\end{array}\right)\left(\begin{array}{c}
\phi_{1}|_{\Gamma} \\
\phi_{2}|_{\Gamma} \\
\vdots \\
\phi_{M-K}|_{\Gamma}
\end{array}\right).
\end{equation}
We write
\begin{equation*}
A=\left(\begin{array}{cc}
A_{11} & A_{12} \\
A_{21} & A_{22}
\end{array}\right), B=\left(\begin{array}{cc}
B_{11} & B_{12} \\
B_{21} & B_{22}
\end{array}\right)
\end{equation*}
where $A_{11} = \tilde A, B_{11} = \tilde B$. Define  a transformation matrix
\begin{equation*}
P=\left(\begin{array}{cc}
I_{(M-K) \times (M-K)} & 0 \\
-C & I_{K \times K}
\end{array}\right)
\end{equation*}
where $I_{k \times k}$ is the $k\times k$ unit matrix \m{for some positive integer $k$}. From (\ref{basis_dependence}) and the definitions of $A$ and $B$, one derives
\begin{equation*}
\bar A := P A P^{T}=\left(\begin{array}{cc}
A_{11} & 0 \\
0 & 0
\end{array}\right), \bar B: = P B P^{T}=\left(\begin{array}{cc}
B_{11} & 0 \\
0 & 0
\end{array}\right).
\end{equation*}
Since $P$ is invertible,  simple arguments in linear algebra indicate that
\begin{equation*}
\operatorname{rank}(A-\lambda B) = \operatorname{rank}(\bar A-\lambda \bar B)=\operatorname{rank}(A_{11}-\lambda B_{11})=\operatorname{rank}(\tilde A-\lambda \tilde B), \quad \forall \lambda\in \mathbb{R}.
\end{equation*}
The fact $\operatorname{rank}(B) = \operatorname{rank}(\tilde B)$ reveals that
\begin{equation}\label{temp0}
\operatorname{rank}(A-\lambda B)<\operatorname{rank}(B)  \Leftrightarrow \operatorname{rank}(\tilde A-\lambda \tilde B)<\operatorname{rank}(\tilde B).
\end{equation}
By the definition of the stiffness matrix $A$ and the mass matrix $B$ of the trace finite element method, we can easily check that $\operatorname{nrank}(A,B)=\operatorname{rank}(B)$ and also
$\operatorname{nrank}(\tilde{A},\tilde{B})=\operatorname{rank}(\tilde{B})$.
This together with~\eqref{temp0} implies the conclusion of the theorem.
\end{proof}


By the above theorem, to obtain the true \m{eigenvalues} of the discrete problem~\eqref{eig_problem_discrete_ex} \m{with their correct multiplicity}, we can
 compute the finite eigenvalues of the generalized (algebraic) eigenvalue problem~\eqref{AxLBx}.
There exist many algorithms in literature to solve a singular generalized eigenvalue problem 
 \cite{van1979computation,wilkinson1979kronecker,demmel1993generalized,muhivc2009singular}.
Recently, an efficient and robust algorithm has been developed by a rank-completing perturbation technique in \cite{hochstenbach2019solving}.
In this method, one constructs a perturbation of the singular problem which \m{shares} the same finite eigenvalues with the original problem. 
Then, the finite eigenvalues can be selected by using the left and right eigenvectors of the perturbed problem to satisfy certain conditions.
In our experiments we use the algorithm in \cite{hochstenbach2019solving} to solve the singular generalized  eigenvalue problem~\eqref{AxLBx}. More details  are referred to \cite{hochstenbach2019solving}.


\begin{remark}
  For the standard trace FEMs on discrete surfaces, we could use the same idea to construct and solve the corresponding generalized eigenvalue problems. We will present some comparisons in  our numerical experiments in Section 6.
\end{remark}

\section{Error analysis}
In this section, we will present  error analysis for the trace finite element method introduced in the previous sections.
Since there is no geometric error induced by the discretization of the surface, the  analysis is simpler than that for
the standard trace finite element methods \cite{olshanskii2009finite}. In the analysis, we ignore the errors due to numerical quadratures for simplicity.
\subsection{The Laplace-Beltrami equation}
%

 We first introduce some notations. Let $d(x)$ be a signed distance function of $\Gamma$. Then the unit normal vector of $\Gamma$ is given by
\begin{equation}\mathbf{n}({x}):=\nabla d({x}),\quad \forall {x} \in \mathcal{N}_\delta
\end{equation}
The closest point projection operator $\mathbf{p}(x)$ is defined as
\begin{equation}\mathbf{p}({x})={x}-d({x}) \mathbf{n}({x}), \quad \forall  {x} \in \mathcal{N}_\delta.
\end{equation}
For a smooth surface with bounded mean curvature, we can choose $\delta$ small enough to make the projection uniquely defined for all ${x} \in \mathcal{N}_\delta$.
Given a function $v:\Gamma \rightarrow \mathbb{R}$, we  define its natural extension $v^e$ in  $\mathcal{N}_\delta$ as,
\begin{equation}v^e({x})={v}\left(\mathbf{p}({x})\right)={v}({x}-d({x}) \nabla d({x})), \quad \forall {x} \in \mathcal{N}_\delta.
\end{equation}
Suppose that the mesh size $h$ is small enough so that
 $ \omega_h \subset \mathcal{N}_\delta.$
Denote by $\Pi^{}_{h}: C\left({\omega_{h}}\right) \rightarrow W_h$ be the standard Lagrange interpolation operator \m{of degree up to $k$} in
the bulk region. In the following, we use $x\lesssim y$ to \m{represent $x\leq c_0 y$} for some constant $c_0$ independent of the mesh size $h$. In the analysis, we assume $c\geq 0$ in the problem \eqref{m0}.

The following interpolation result is standard in the finite element theory (e.g. \cite{ciarlet2002finite}).
\begin{mylemma}\label{Interpolation_appro}
Let $0\leq m\leq k+1$ and $\hv\in H^{k+1}(\omega_h)$, we have
\begin{equation}
 \|\hv-\Pi^{}_h \hv\|_{H^m(\omega_h)}\lesssim h^{k+1-m}\|\hv\|_{H^{k+1}(\omega_h)}.
\end{equation}
This inequality also holds elementwisely, i.e.  for any $T\in \mathcal{T}_\Gamma$,
\begin{equation}
 \|\hv-\Pi^{}_h\hv\|_{H^m(T)}\lesssim h^{k+1-m}\|\hv\|_{H^{k+1}(T)}.
\end{equation}
\end{mylemma}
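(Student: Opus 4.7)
The plan is to reduce the bound to the standard Bramble-Hilbert/scaling framework, since $\Pi_h$ is exactly the Lagrange interpolant on a shape-regular affine mesh and the assertion is a textbook estimate (see Ciarlet, as already cited). I would prove the local (elementwise) estimate first and then obtain the global bound by summation.

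First, I would work on a reference tetrahedron $\hat T$. Let $\hat\Pi$ be the reference Lagrange interpolation operator of degree $k$, which reproduces $P_k(\hat T)$. Since the point-evaluation functionals at the Lagrange nodes are bounded on $H^{k+1}(\hat T)$ (Sobolev embedding, using $k+1 > 3/2$ in $\mathbb{R}^3$), the operator $I - \hat\Pi$ is a bounded linear map from $H^{k+1}(\hat T)$ to $H^m(\hat T)$ that vanishes on $P_k(\hat T)$. The Bramble-Hilbert lemma then gives
\begin{equation*}
\|\hat w - \hat\Pi \hat w\|_{H^m(\hat T)} \lesssim |\hat w|_{H^{k+1}(\hat T)}, \qquad \forall\, \hat w\in H^{k+1}(\hat T).
\end{equation*}

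Second, for each $T\in\mathcal{T}_h^\Gamma$, let $F_T:\hat T\to T$ be the affine map of the shape-regular mesh; its Jacobian satisfies $\|DF_T\|\lesssim h_T$ and $\|DF_T^{-1}\|\lesssim h_T^{-1}$. Standard scaling of Sobolev seminorms under affine pullback yields, for any $v\in H^{k+1}(T)$,
\begin{equation*}
|\hat v|_{H^{k+1}(\hat T)} \lesssim h_T^{k+1-3/2}\,|v|_{H^{k+1}(T)}, \qquad \|\hat v\|_{H^m(\hat T)} \gtrsim h_T^{m-3/2}\,\|v\|_{H^m(T)},
\end{equation*}
where $\hat v = v\circ F_T$. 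Because $\Pi_h$ is defined via the reference Lagrange interpolant, $\widehat{\Pi_h v} = \hat\Pi\hat v$. Combining with the reference estimate and cancelling the $h_T^{-3/2}$ factors gives the elementwise bound
\begin{equation*}
\|v - \Pi_h v\|_{H^m(T)} \lesssim h_T^{k+1-m}\,|v|_{H^{k+1}(T)} \le h_T^{k+1-m}\,\|v\|_{H^{k+1}(T)},
\end{equation*}
which proves the second inequality of the lemma.

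Third, for the global estimate, I would square both sides, sum over all $T\in\mathcal{T}_h^\Gamma$, bound $h_T\le h$ (using $k+1-m\ge 0$), and take the square root:
\begin{equation*}
\|\hv - \Pi_h \hv\|_{H^m(\omega_h)}^2 = \sum_{T\in\mathcal{T}_h^\Gamma}\|\hv - \Pi_h \hv\|_{H^m(T)}^2 \lesssim h^{2(k+1-m)}\sum_{T}\|\hv\|_{H^{k+1}(T)}^2 = h^{2(k+1-m)}\|\hv\|_{H^{k+1}(\omega_h)}^2.
\end{equation*}
There is no real obstacle here, as the statement is purely the classical Lagrange interpolation estimate applied to the bulk mesh; the only caveat is to note that shape-regularity of $\mathcal{T}_h$ (inherited by the cut subfamily $\mathcal{T}_h^\Gamma$) is the sole geometric hypothesis needed, and that the constants are independent of how $\Gamma$ cuts the tetrahedra since the estimate is entirely intrinsic to $\omega_h$.
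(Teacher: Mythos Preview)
Your proof is correct and follows the standard Bramble--Hilbert plus affine-scaling route that is precisely what the paper has in mind: the paper does not give its own proof at all but simply cites Ciarlet as a reference for this classical estimate. Your write-up is thus a faithful expansion of what the paper leaves to the literature.
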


The next lemma is a modified version of the trace inequality.
\begin{mylemma}
\label{Modified scaling trace inequality}
Suppose that the surface $\Gamma$ is smooth and $\max \limits_{x\in \Gamma}(|\kappa_1(x)|+|\kappa_2(x)|)<C_{\kappa}$,
\m{where $\kappa_1$ and $\kappa_2$ represent two principal curvatures.}
Let $F_T$ be the intersection between  $\Gamma$ and an element $T$,
then
\begin{equation}\label{trace inequallity}
\|w\|_{0,F_T}^2 \lesssim h^{-1}\|w\|^2_{0,T}+h\|w\|^2_{1,T},
\end{equation}
 holds for all $ w\in H^1(T)$ when $h$ is smaller than a positive number $c_0(C_\kappa)$.
\end{mylemma}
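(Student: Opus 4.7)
The plan is to reduce the claim to a reference-size trace inequality via rescaling, prove that reference inequality using the fundamental theorem of calculus transverse to $F_T$, and then unscale. Concretely, I introduce $\hat T = T/h_T$ and $\hat F_T = F_T/h_T$ so that $\hat T$ has unit diameter, and set $\hat w(\hat x) = w(h_T \hat x)$. After this dilation the principal curvatures of $\hat F_T$ are bounded by $h_T C_\kappa \le c_0(C_\kappa) C_\kappa$, and by choosing $c_0$ small the patch $\hat F_T$ becomes quantitatively close to a plane. The standard scaling identities
\[
\|w\|_{0,T}^2 = h_T^3 \|\hat w\|_{0,\hat T}^2, \qquad \|w\|_{0,F_T}^2 = h_T^2 \|\hat w\|_{0,\hat F_T}^2, \qquad \|\nabla w\|_{0,T}^2 = h_T \|\nabla \hat w\|_{0,\hat T}^2
\]
show that the target inequality is equivalent to the reference-scale bound
\[
\|\hat w\|_{0,\hat F_T}^2 \lesssim \|\hat w\|_{0,\hat T}^2 + \|\nabla \hat w\|_{0,\hat T}^2,
\]
with a constant independent of the particular configuration.

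For the reference inequality I would select a direction $\mathbf{e}$ nearly normal to $\hat F_T$ so that, thanks to the smallness of the rescaled curvature, $\hat F_T$ is a graph $\zeta = \psi(\xi)$ over the tangent plane $\mathbf{e}^\perp$ with $\|\nabla\psi\|_\infty \le 1/2$ and bounded surface element. By density it suffices to work with $\hat w \in C^1(\overline{\hat T})$; for each $\hat x \in \hat F_T$ I then write
\[
\hat w(\hat x) = \hat w(\hat z) + \int_{\hat z}^{\hat x} \partial_{\mathbf e} \hat w \, d\nu
\]
along the line through $\hat x$ in direction $\mathbf{e}$, square, apply Cauchy--Schwarz, and average $\hat z$ over a segment of fixed positive length contained in $\hat T$. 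Integrating the resulting pointwise bound over $\hat x \in \hat F_T$ (with bounded surface element) and invoking Fubini yields the reference inequality. Scaling back through the three identities above produces exactly $\|w\|_{0,F_T}^2 \lesssim h^{-1}\|w\|_{0,T}^2 + h\|\nabla w\|_{0,T}^2$.

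The hard part will be ensuring that the constant in the reference bound is uniform over all admissible configurations $(\hat T, \hat F_T)$. This is exactly where $h < c_0(C_\kappa)$ pays off: it guarantees the graph representation with slope at most $1/2$, keeps the surface element uniformly bounded, and forces the transverse fibre through every point of $\hat F_T$ to have length uniformly bounded below inside $\hat T$, so that the averaging step in $\hat z$ introduces no blow-up. Once this geometric uniformity is secured, the remainder of the argument is standard scaling bookkeeping.
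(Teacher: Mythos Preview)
Your scaling reduction to a unit-size configuration is correct and matches the paper's strategy, but the reference-scale argument has a genuine gap. You assert that the smallness of $hC_\kappa$ ``forces the transverse fibre through every point of $\hat F_T$ to have length uniformly bounded below inside $\hat T$,'' and this is false: the curvature bound makes $\hat F_T$ nearly planar, yet says nothing about \emph{where} $\hat F_T$ sits inside $\hat T$. If $\hat F_T$ passes near a vertex or an edge of $\hat T$---a situation the trace-FEM setting never excludes---then the $\mathbf e$-fibres through nearby points of $\hat F_T$ can be arbitrarily short, and your averaging over ``a segment of fixed positive length'' cannot produce a uniform constant. A concrete instance: take $\hat T=\{x_i\ge 0,\ x_1+x_2+x_3\le 1\}$ and $\hat F_T=\{x_1=x_2\}\cap\hat T$; at the point $(\epsilon,\epsilon,\epsilon)$ the fibre in the normal direction $(1,-1,0)/\sqrt 2$ has length $2\sqrt 2\,\epsilon$.

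The paper sidesteps exactly this obstruction by replacing fibrewise averaging with the divergence theorem on one of the two pieces $\tilde T_1$ cut off by $\tilde F_T$: from $2\int_{\tilde T_1} w\,\partial_{\xi_3}w = \int_{\partial\tilde T_1}(0,0,w^2)^T\!\cdot\boldsymbol n\,dS$, the contribution over $\tilde F_T$ is bounded below by $(1-C_\kappa h)\|w\|_{0,\tilde F_T}^2$ thanks to the normal estimate $n_{\xi_3}\ge 1-C_\kappa h$, while the remaining boundary contribution lives on $\partial\tilde T_1\setminus\tilde F_T\subset\partial\tilde T$ and is controlled by the \emph{standard} trace inequality on the reference tetrahedron. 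All position-dependence is thus transferred to the fixed boundary $\partial\tilde T$, where uniformity is automatic. To repair your route you would need either to adopt this divergence-theorem trick or to send $\hat z$ to $\partial\hat T$ (and then invoke the boundary trace inequality there) rather than average over a segment of prescribed length.
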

\begin{proof}
  We first map the element $T$ by an affine mapping to a reference element $\tilde{T}$  and denote by $\tilde{F}_T$ the image of $F_T$.
  Assume $\tilde F_T$ divides $\tilde T$ into two subsets $\tilde T_{1}$ and $\tilde T_{2}$ where $T_1$ is shape regular. The coordinates in the reference domain are denoted
 as $(\xi_1,\xi_2,\xi_3)$.   Let $\boldsymbol{n}=(n_{\xi_1},n_{\xi_2},n_{\xi_3})$ be
  the outward unit normal of the boundary of $\tilde T_{1}$. Without loss of generality, we could assume that there is at least one point
  on $\tilde{F}_T$ such that $\boldsymbol{n}=(0,0,1)^T$. This can be done by a simple rotation of $\tilde{T}$ if the condition does not hold.
 Since the size of $\tilde{T}$ is unit, the sum of the principle curvatures of $\tilde{F}_T$ should be smaller than $C_{\kappa}h$. Then
 we have the out normal $|\boldsymbol{n}-(0,0,1)^T|\leq C_{\kappa} h$  on $\tilde{F}_T$. It is easy to see that 
 we have $n_{\xi_3} \geq 1-C_{\kappa}h$.

Then,  by the divergence theorem, we have
  \begin{equation}
  \begin{aligned}
    2 \int_{\tilde{T}_{1}} w \frac{\partial w}{\partial \xi_3} \mathrm{d} V &=\int_{\tilde{T}_{1}} \operatorname{div}\left(0, 0, w^{2}\right)^T \mathrm{d} V=\int_{\partial \tilde{T}_{1}} \boldsymbol{n} \cdot\left(0, 0,w^{2}\right)^T \mathrm{d} S \\
    &\geq (1-C_{\kappa} h)\int_{\tilde{F}_T} w^{2} \mathrm{~d} S+\int_{\partial \tilde{T}_{1} \backslash \tilde{F_T}} n_{\xi_3} w^{2} \mathrm{~d} S.
    \end{aligned}
  \end{equation}
  From Cauchy-Schwarz' inequality and the well-known trace inequality,
  \begin{equation}
    \|w\|_{0, \partial \tilde{T}}^{2} \leq C\|w\|_{0, \tilde{T}}\|w\|_{1, \tilde{T}}, \quad \forall w \in H^{1}(\tilde{T}),
  \end{equation}
  one derives
  \begin{equation}
\begin{aligned}
 (1-C_{\kappa} h)\|w\|_{0, \tilde{F}_T}^{2} & \leq 2\|w\|_{0, \tilde{T}_{1}}\|w\|_{1, \tilde{T}_{1}}+\|w\|_{0, \partial \tilde{T}_{1} \backslash \tilde{F_T}}^{2} \\
& \leq 2\|w\|_{0, \tilde{T}_{1}}\|w\|_{1, \tilde{T_{1}}}+\|w\|_{0, \partial \tilde{T}}^{2} \leq C\|w\|_{0, \tilde{T}}\|w\|_{1, \tilde{T}}\\
& \leq \frac{C}{2}(\|w\|_{0, \tilde{T}}^2+\|w\|_{1, \tilde{T}}^2).
\end{aligned}
\end{equation}
One could easily choose a small $h$  such that  $1>1-C_{\kappa} h\geq C_0>0$. Then
the result of the lemma  follows by a standard scaling argument since $T$ is shape regular.
\end{proof}

The following lemma gives some \m{estimates} on the extensions.
\begin{mylemma} 
\label{Hm_extension}
Under the condition of Lemma~\ref{Modified scaling trace inequality},
suppose $u\in H^{k+1}(\Gamma)$ with $k\geq 1$ 
 and $m=0,1$, then we have
\begin{equation}\label{trace_int_relation}
   \|u^e\|_{H^{m}(\mathcal{N}_\delta)} \lesssim h^{1/2}\|u\|_{H^{m}(\Gamma)},
\end{equation}
and
\begin{equation}\label{trace_int_relation2}
  \|u-\Pi^{}_h u^e\|_{H^{m}(\Gamma)}\lesssim h^{k+1/2-m} \|u^e\|_{H^{k+1}(\omega_h)}.
\end{equation}
\end{mylemma}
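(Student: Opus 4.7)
The plan is to establish the two bounds by separate arguments. For the norm estimate \eqref{trace_int_relation}, I will exploit that the normal (constant‑along‑normals) extension $u^{e}$ is essentially one‑dimensional in the normal direction, so I reduce the bulk integral to an iterated integral over $\Gamma$ and the signed distance. For the interpolation estimate \eqref{trace_int_relation2}, I will work element by element on each $F_T$ using the modified trace inequality proved in Lemma~\ref{Modified scaling trace inequality} together with the standard bulk interpolation bound in Lemma~\ref{Interpolation_appro}.

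First I would address \eqref{trace_int_relation}. Since the integral is over a narrow band of width $\sim h$ (recall $\omega_h\subset \mathcal{N}_\delta$ is the region actually used in the analysis), I would apply the tubular neighbourhood coarea formula
\begin{equation*}
\int_{\omega_h} g(x)\,dx \;=\; \int_{-C h}^{C h}\!\int_{\Gamma_t} g(x)\,(1+O(t))\,ds\,dt,
\end{equation*}
where $\Gamma_t:=\{d=t\}$ and the Jacobian factor is close to $1$ because $\Gamma$ has bounded curvature. For $m=0$, since $u^{e}(x)=u(\mathbf{p}(x))$ is constant along normals, $\|u^{e}\|_{L^2(\Gamma_t)}^2\lesssim \|u\|_{L^2(\Gamma)}^2$ uniformly in $t$, and integrating in $t$ over an interval of length $\sim h$ yields the factor $h$. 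For $m=1$, I would use the well‑known identity
\begin{equation*}
\nabla u^{e}(x)=\bigl(I-d(x)\mathcal{W}(\mathbf{p}(x))\bigr)^{-1}(\nabla_\Gamma u)^{e}(x),
\end{equation*}
where $\mathcal{W}$ is the Weingarten map; since $|d|\lesssim h$ and $\mathcal{W}$ is uniformly bounded, the operator norm of $(I-d\mathcal{W})^{-1}$ is $\le 1+Ch$, so $|\nabla u^{e}(x)|\lesssim |\nabla_\Gamma u|(\mathbf{p}(x))$. Applying the same slicing argument to $|u^{e}|^2+|\nabla u^{e}|^2$ produces the stated bound with the $h^{1/2}$ prefactor.

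Next, for \eqref{trace_int_relation2}, I would localise on each $T\in\mathcal{T}_h^\Gamma$. Since $u|_\Gamma=u^{e}|_\Gamma$, on $F_T$ we have $u-\Pi_h u^{e}=(u^{e}-\Pi_h u^{e})|_{F_T}$. Applying Lemma~\ref{Modified scaling trace inequality} componentwise to $w=u^{e}-\Pi_h u^{e}$ and to its first derivatives gives
\begin{equation*}
\|u^{e}-\Pi_h u^{e}\|_{H^m(F_T)}^2 \;\lesssim\; h^{-1}\|u^{e}-\Pi_h u^{e}\|_{H^m(T)}^2 + h\,\|u^{e}-\Pi_h u^{e}\|_{H^{m+1}(T)}^2.
\end{equation*}
Inserting the elementwise bound from Lemma~\ref{Interpolation_appro} with exponents $m$ and $m+1$ yields $\|u^{e}-\Pi_h u^{e}\|_{H^m(F_T)}^2\lesssim h^{2(k+1/2-m)}\|u^{e}\|_{H^{k+1}(T)}^2$. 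Summing over $T\in\mathcal{T}_h^\Gamma$ and taking square roots gives \eqref{trace_int_relation2}.

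The main obstacle is the $H^1$ half of \eqref{trace_int_relation}: one must control $\nabla u^{e}$ through the Weingarten identity and ensure that the curvature‑induced factor $(I-d\mathcal{W})^{-1}$ is uniformly bounded, which is where the smoothness of $\Gamma$ and the smallness condition on $h$ (inherited from Lemma~\ref{Modified scaling trace inequality}) are essential. Apart from that, both estimates reduce to combining a slicing/coarea argument with already established tools.
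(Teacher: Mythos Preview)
Your proposal is correct and follows essentially the same route as the paper. For \eqref{trace_int_relation} the paper simply quotes the two inequalities $\|u^{e}\|_{L^{2}(\mathcal{N}_\delta)}\lesssim\sqrt{h}\,\|u\|_{L^{2}(\Gamma)}$ and $\|\nabla u^{e}\|_{L^{2}(\mathcal{N}_\delta)}\lesssim\sqrt{h}\,\|\nabla u\|_{L^{2}(\Gamma)}$ from \cite{olshanskii2009finite}, whereas you spell out the underlying coarea/Weingarten argument that proves those bounds; for \eqref{trace_int_relation2} both you and the paper localise to each $F_T$, apply the modified trace inequality of Lemma~\ref{Modified scaling trace inequality} to $u^{e}-\Pi_h u^{e}$ (and, for $m=1$, to its first derivatives), insert the elementwise interpolation bound of Lemma~\ref{Interpolation_appro}, and sum over $T\in\mathcal{T}_h^\Gamma$.
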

\begin{proof}
Resort to (3.17) and (3.18) in \cite{olshanskii2009finite},
\begin{equation}
\left\|u^{e}\right\|_{L^{2}\left(\mathcal{N}_{\delta}\right)} \lesssim \sqrt{h}\|u\|_{L^{2}(\Gamma)}, \qquad
\left\|\nabla u^{e}\right\|_{L^{2}\left(\mathcal{N}_{\delta}\right)} \lesssim \sqrt{h}\|\nabla u\|_{L^{2}(\Gamma)}.
\end{equation}
By the two equations, we easily get
\begin{equation*}
\left\|u^{e}\right\|^2_{L^{2}\left(\mathcal{N}_{\delta}\right)}
 +\left\|\nabla u^{e}\right\|^2_{L^{2}\left(\mathcal{N}_{\delta}\right)}
 \lesssim {h}\|u\|^2_{L^{2}(\Gamma)}
 + {h}\|\nabla_\Gamma u\|^2_{L^{2}(\Gamma)}
 \end{equation*}
This completes the proof of (\ref{trace_int_relation}).
Recall lemma  \ref{Modified scaling trace inequality}, 
\begin{equation}
\|w\|_{L_{2}(F_T)} \lesssim h_{}^{-\frac{1}{2}}\|w\|_{L_{2}( T)}+h_{}^{\frac{1}{2}}\left\|\nabla_{\Gamma} w\right\|_{L_{2}( T)}, \quad \forall w \in H^{1}(T).
\end{equation}
Together with lemma \ref{Interpolation_appro}, we write
\begin{equation*}
\begin{split}
        \left\|u-\Pi^{}_T u^e \right\|_{L^2(F_T)}
\lesssim& h^{-1/2}\left\|u^e-\Pi^{}_T u^e \right\|_{L^2(T)}+h^{1/2}\left\|\nabla (u^e-\Pi^{}_T u^e)\right\|_{L^2(T)}\\
\lesssim &h^{k+1/2} \left\|u^e \right\|_{H^{k+1}(T)}.
\end{split}
\end{equation*}
Similar estimates follow similarly for the semi-$H^1$ norm,
\begin{equation*}
\begin{split}
\left\|\nabla (u-\Pi^{}_T u^e) \right\|_{L^2(F_T)}
\lesssim& h^{-1/2}\left\|\nabla(u^e-\Pi^{}_T u^e) \right\|_{L^2(T)}+h^{1/2}\left\|\nabla^2 (u^e-\Pi^{}_T u^e)\right\|_{L^2(T)}\\
\lesssim &h^{k-1/2} \left\|u^e \right\|_{H^{k+1}(T)}.
\end{split}
\end{equation*}
The assertion (\ref{trace_int_relation2})  follows by summation of the above equations for all $T\in \omega_h$.
\end{proof}
\begin{mylemma}[Approximation in $H^{1}$ norm]\label{appro_H1}
Under the condition of Lemma~\ref{Modified scaling trace inequality},
suppose that ${u} \in$ $H^{k+1}(\Gamma)$ with $k\geq 1$,
then we have
\begin{equation}
  \inf _{v_h \in {V}_h^{}}\left\|{u} - v_h \right\|_{H^{1}(\Gamma)} \lesssim h^{k}\|{u}\|_{H^{k+1}(\Gamma)}. 
\end{equation}
\end{mylemma}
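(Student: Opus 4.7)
The natural candidate for the infimum is $v_h := (\Pi_h u^e)|_\Gamma$. Since $u\in H^{k+1}(\Gamma)$, the natural extension $u^e$ is well defined on $\mathcal{N}_\delta\supset \omega_h$ and smooth enough that $\Pi_h u^e \in W_h$; its trace on $\Gamma$ therefore belongs to $V_h$ by definition \eqref{Pk_Gamma}. So it suffices to bound $\|u - \Pi_h u^e\|_{H^1(\Gamma)}$.

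First, I would apply Lemma \ref{Hm_extension}, equation \eqref{trace_int_relation2}, with $m=1$ to get
\begin{equation*}
\|u - \Pi_h u^e\|_{H^1(\Gamma)} \lesssim h^{k-1/2}\,\|u^e\|_{H^{k+1}(\omega_h)}.
\end{equation*}
Note that the $h^{1/2}$ loss comes from the Modified Trace Inequality (Lemma \ref{Modified scaling trace inequality}) used inside the proof of \eqref{trace_int_relation2}. To recover the optimal order we must then beat this loss with an extension estimate in the $H^{k+1}$ norm.

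The second step is to establish the higher-order analogue of \eqref{trace_int_relation},
\begin{equation*}
 \|u^e\|_{H^{k+1}(\omega_h)} \lesssim h^{1/2}\,\|u\|_{H^{k+1}(\Gamma)}.
\end{equation*}
This is not stated as is in Lemma \ref{Hm_extension}, but it follows by the same co-area argument used to prove \eqref{trace_int_relation} in \cite{olshanskii2009finite}: since $u^e$ is constant along normals to $\Gamma$ and $\omega_h\subset\mathcal{N}_\delta$ has width $O(h)$, for $r\in(-\delta,\delta)$ the surface derivatives of $u^e$ on the level surface $\Gamma_r=\{d=r\}$ can be expressed in terms of the surface derivatives of $u$ on $\Gamma$ via the projection $\mathbf{p}$, with coefficients uniformly bounded by the principal curvatures and their derivatives on $\Gamma$. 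An application of the co-area formula
\begin{equation*}
\int_{\omega_h} |D^\alpha u^e|^2\, dx \;=\; \int_{-\mathcal{O}(h)}^{\mathcal{O}(h)} \int_{\Gamma_r} |D^\alpha u^e|^2\, ds\, dr \;\lesssim\; h\,\|u\|_{H^{|\alpha|}(\Gamma)}^2
\end{equation*}
for every multi-index $|\alpha|\le k+1$ then yields the claim.

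Combining the two inequalities gives
\begin{equation*}
\|u-\Pi_h u^e\|_{H^1(\Gamma)} \lesssim h^{k-1/2}\cdot h^{1/2}\|u\|_{H^{k+1}(\Gamma)} = h^k \|u\|_{H^{k+1}(\Gamma)},
\end{equation*}
which is exactly the assertion since $\Pi_h u^e|_\Gamma \in V_h$. The main nontrivial step is the higher-order extension bound, but because $u^e$ is the constant-along-normal extension, it reduces to iterating the $m=0,1$ arguments already used to prove \eqref{trace_int_relation}; no new geometric input beyond the smoothness of $\Gamma$ (already assumed via the curvature bound in Lemma \ref{Modified scaling trace inequality}) is needed.
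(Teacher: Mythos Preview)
Your proposal is correct and follows essentially the same route as the paper: choose $v_h=(\Pi_h u^e)|_\Gamma$, apply \eqref{trace_int_relation2} with $m=1$, and then recover the lost $h^{1/2}$ via the higher-order extension bound $\|u^e\|_{H^{k+1}(\omega_h)}\lesssim h^{1/2}\|u\|_{H^{k+1}(\Gamma)}$. You are in fact more explicit than the paper about this last step---the paper simply inserts it in the chain $\|u^e\|_{H^{k+1}(\omega_h)}\le\|u^e\|_{H^{k+1}(\mathcal N_\delta)}\lesssim h^{1/2}\|u\|_{H^{k+1}(\Gamma)}$ without comment, whereas you correctly identify that it requires the co-area argument of \cite{olshanskii2009finite} at order $k+1$.
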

\begin{proof}
Applying Lemma \ref{Hm_extension} directly yields
\begin{equation*}
\begin{split}
 \inf _{v_h \in {V}_h^{}}\left\|{u} - v_h \right\|_{H^{1}(\Gamma)} &\lesssim \|u-\Pi^{}_h u^e\|_{H^1(\Gamma)} 
    \lesssim h^{k-1/2}\|u^e\|_{H^{k+1}(\omega_h)}\\&
    \lesssim h^{k-1/2}\|u^e\|_{H^{k+1}(\mathcal{N}_\delta)}
    \lesssim h^{k}\|u\|_{H^{k+1}(\Gamma)}.
\end{split}
\end{equation*}


\end{proof}
\begin{mytheorem}[A-priori error estimates]\label{error_estimates}
Suppose  the condition of Lemma~\ref{Modified scaling trace inequality} holds and the solution $u$ of the problem~(\ref{m0}) with $c\geq 0$ is in $H^{k+1}(\Gamma)$. Let $u_h \in {V}_{h}^{}$ be the solution of the finite element discretization (\ref{weak_form_apxm}). We assume both $u$ and $u_h$ satisfy the zero-average condition on $\Gamma$ when $c=0$.
   Then we have
$$
\left\|u-u_h\right\|_{L_{2}(\Gamma)}+h\left\|\nabla_{\Gamma}\left(u-u_h\right)\right\|_{L_{2}(\Gamma)} \lesssim h^{k+1}\|{u}\|_{H^{k+1}(\Gamma)}.
$$
\end{mytheorem}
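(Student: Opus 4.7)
The plan is a standard Galerkin argument: combine a Céa-type $H^1$ estimate with the Aubin--Nitsche duality trick to gain an extra power of $h$ in $L^2$. Since $V_h \subset H^1(\Gamma)$ is a conforming subspace and the bilinear form in \eqref{weak_form_apxm} is the restriction of that in \eqref{weak_form}, Galerkin orthogonality is immediate:
\begin{equation*}
a(u-u_h, v_h) + (c(u-u_h), v_h)_\Gamma = 0 \qquad \forall v_h \in V_h.
\end{equation*}

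First I would establish coercivity of the bilinear form $B(u,v):=a(u,v)+(cu,v)_\Gamma$ on the appropriate space. When $c>0$, coercivity on $H^1(\Gamma)$ is trivial. When $c=0$, I would restrict to the zero-average subspace $H^1_*(\Gamma):=\{v\in H^1(\Gamma):\int_\Gamma v=0\}$ and invoke the Poincaré inequality on the closed surface $\Gamma$; the hypothesis that both $u$ and $u_h$ are zero-average puts the error $e:=u-u_h$ in $H^1_*(\Gamma)$, so this is legitimate. Continuity is immediate. Céa's lemma then gives
\begin{equation*}
\|u-u_h\|_{H^1(\Gamma)} \lesssim \inf_{v_h \in V_h}\|u-v_h\|_{H^1(\Gamma)} \lesssim h^k \|u\|_{H^{k+1}(\Gamma)},
\end{equation*}
where the last inequality is Lemma~\ref{appro_H1}. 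This handles the $h\|\nabla_\Gamma(u-u_h)\|_{L^2(\Gamma)}$ contribution.

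For the $L^2$ bound, I would use duality. Consider the dual problem: find $w\in H^1(\Gamma)$ (zero-average when $c=0$) such that
\begin{equation*}
a(w,v) + (cw,v)_\Gamma = (e,v)_\Gamma \qquad \forall v \in H^1(\Gamma).
\end{equation*}
The right-hand side satisfies the necessary compatibility $(e,1)_\Gamma=0$ when $c=0$, again by hypothesis, so $w$ exists. By elliptic regularity on the smooth closed surface $\Gamma$, $w\in H^2(\Gamma)$ with $\|w\|_{H^2(\Gamma)}\lesssim \|e\|_{L^2(\Gamma)}$. Testing with $v=e$ and using Galerkin orthogonality with an arbitrary $w_h\in V_h$ yields
\begin{equation*}
\|e\|_{L^2(\Gamma)}^2 = a(w-w_h,e) + (c(w-w_h),e)_\Gamma \lesssim \|w-w_h\|_{H^1(\Gamma)}\,\|e\|_{H^1(\Gamma)}.
\end{equation*}
Choosing $w_h$ as the interpolant from Lemma~\ref{appro_H1} applied with regularity exponent $k=1$ gives $\|w-w_h\|_{H^1(\Gamma)}\lesssim h\|w\|_{H^2(\Gamma)}\lesssim h\|e\|_{L^2(\Gamma)}$. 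Combining with the $H^1$ estimate above produces $\|e\|_{L^2(\Gamma)} \lesssim h\,\|e\|_{H^1(\Gamma)} \lesssim h^{k+1}\|u\|_{H^{k+1}(\Gamma)}$.

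The main obstacle is the $c=0$ case: one must verify that the zero-average conditions on $u$, $u_h$, and $e$ propagate correctly, that Poincaré's inequality yields coercivity on the mean-value-zero subspace, and that the dual problem has the required compatibility and elliptic regularity on the smooth closed manifold $\Gamma$. Once those ingredients are in place, the two displayed estimates assemble to give the stated bound.
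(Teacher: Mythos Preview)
Your proposal is correct and follows essentially the same route as the paper: Galerkin orthogonality plus C\'ea's lemma with Lemma~\ref{appro_H1} for the $H^1$ estimate, then Aubin--Nitsche duality with $H^2$ regularity of the dual solution for the $L^2$ bound. The only cosmetic difference is that the paper bounds the dual error using the discrete dual solution $z_h$ and the already-established $H^1$ estimate, whereas you subtract an interpolant of $w$ directly; both are standard and equivalent, and your handling of the $c=0$ case is in fact more explicit than the paper's.
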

\begin{proof} We only prove the theorem when $c>0$. When $c=0$, the proof is similar with the assumption that $\int_{\Gamma}u ds=\int_{\Gamma}u_h ds=0$.

\m{Define $\hat a(u,v):= a(u,v)+(cu,v)_\Gamma$.}
Take $v=v_h$ in (\ref{weak_form}) and then subtract (\ref{weak_form_apxm}), arrive at
\begin{equation*}
  \hat a(u-u_h,v_h) = 0, \quad \forall v_h \in V_h^{}(\Gamma).
\end{equation*}
According to the continuity and ellipticity of bilinear operator $\hat{a}$,
\begin{equation*}
\begin{split}
  \left\|u-u_h \right\|^2_{H^1(\Gamma)} &\lesssim \hat a(u-u_h,u-u_h) \\
    &=\hat a(u-u_h,u-v_h)+\hat a(u-u_h,v_h-u_h)\\
    &\lesssim  \left\|u-u_h \right\|_{H^1(\Gamma)}\left\|u-v_h \right\|_{H^1(\Gamma)}
\end{split}
\end{equation*}
then we have
\begin{equation}\label{H1_error_estimate}
\left\|\nabla_{\Gamma}\left(u-u_h\right)\right\|_{L_{2}(\Gamma)} \leq \left\|u-u_h \right\|_{H^1} \lesssim \left\|u-v_h \right\|_{H^1}\\
     \lesssim h^{k}\|{u}\|_{H^{k+1}(\Gamma)}
\end{equation}
Next, we use the Aubin-Nitsche duality technique to estimate the $L_2$ error.
We now consider an auxiliary problem,
\begin{equation}\label{dual_pro}{z} \in H_{}^{1}(\Gamma): \quad \hat a(z,w)=\int_{\Gamma}\left({u}-u_h\right) w \quad \forall w \in H_{}^{1}(\Gamma)\end{equation}
and its finite element approximation problem,
\begin{equation}
{z_h} \in V_{h}^{}(\Gamma): \quad \hat a(z_h,w_h)=\int_{\Gamma}\left({u}-u_h\right) w_h \quad \forall w_h \in V_{h}^{}.
\end{equation}
Take $w=u-u_h$ in (\ref{dual_pro}),
\begin{equation*}
\begin{split}
  \left\|u-u_h \right\|^2_{L^2} &= \hat a(z,u-u_h) 
    =\hat a(z-z_h,u-u_h)\\
    &\lesssim  \left\|z-z_h \right\|_{H^1(\Gamma)}\left\|u-u_h \right\|_{H^1(\Gamma)}\\
    &\lesssim  h^{k+1}\left\|u-u_h \right\|_{L^2}\left\|u \right\|_{H^{k+1}(\Gamma)},\\
\end{split}
\end{equation*}
where in the last inequality we have used \m{$H^1$ error estimate \eqref{H1_error_estimate}} and the regularity property   $\|z\|_{H^2(\Gamma)}\leq \left\|u-u_h \right\|_{L^2}$ for the problem~\eqref{dual_pro}.
This completes the proof.
\end{proof}

\subsection{The Laplace-Beltrami eigenvalue problem}
For the error estimate for the Laplace-Beltrami eigenvalue problem, we adopt the standard approach  using the spectral approximation theory
for compact operators \cite{babuvska1991eigenvalue,boffi2010finite,sun2016finite}.
This is based on the error analysis for the Laplace-Beltrami equation in the previous subsection.

We first introduce some operators as in \cite{babuvska1991eigenvalue,sun2016finite} for the weak formula of the Laplace-Beltrami equations.
In this subsection, we consider only the nontrivial eigenvalues. We denote the function space
$$H^1_c(\Gamma)=\{v\in H^1(\Gamma)\ |\ \int_{\Gamma} v ds=0.\}$$
and
$$L_c^2(\Gamma)=\{v\in L^2(\Gamma)\ |\ \int_{\Gamma} v ds=0.\}
$$
Then an operator $T:L_c^2(\Gamma)\mapsto L_c^2(\Gamma)$ is defined as follows.  For any $f\in L_c^2(\Gamma)$,
$Tf\in H^1_c(\Gamma)\subset L_c^2(\Gamma) $ is a function satisfies
\begin{equation}
a(T f, v) =(f,v)_{\Gamma}, \qquad \forall v\in H^1_c(\Gamma).
\end{equation}
This is the Laplace-Beltrami equation~\eqref{weak_form} (with $c=0$). It is easy to know that the problem is well-defined and $\|Tf\|_{H^1}\leq \|f\|_{L^2}$.
Then the Sobolev embedding theorem implies that $T:L_c^2(\Gamma)\mapsto L_c^2(\Gamma)$ is compact. It is easy to check that
the operator $T$ is also self-adjoint:
\begin{align*}
(T f, g)_{\Gamma}=(g,Tf)_{\Gamma}=a(Tg, Tf)=a(Tf,Tg)=(f,Tg)_{\Gamma}.
\end{align*}
Furthermore, if $\lambda$ is a nonzero eigenvalue of the Laplace-Beltrami eigenvalue problem~\eqref{eig_problem_weak}, then
$\lambda^{-1}$ is the eigenvalue of $T$, corresponding to the same eigenfunctions.

 Denote by $V_{h,c}=V_h\cap H^1_c$, i.e.
$V_{h,c}=\{v_h\in V_h : \int_{\Gamma} v ds=0\}$. We can define a discrete operator $T_h: L_c^2(\Gamma)\mapsto L_c^2(\Gamma)$ analogously.
For any $f\in L^2_c(\Gamma)$, $T_h f\in V_{h,c}\subset L_c^2(\Gamma)$ is determined by
\begin{equation}
a(T_h f, v_h) =(f,v_h)_{\Gamma}, \qquad \forall v_h \in V_{h,c}.
\end{equation}
 It is easy to see that $T_h f$ is the finite element solution of \eqref{weak_form_apxm} with $c=0$.
$T_h$ is a self-adjoint operator. If $\lambda_h$ is a nonzero eigenvalue of the problem~\eqref{eig_problem_discrete}, then
$\lambda^{-1}_h$ is the eigenvalue of $T_h$.
In addition, the error analysis
in last subsection implies that
$$\|Tf-T_h f\|_{H^1}\leq C h^k \|f\|_{k-1},$$
where we use the well-known regularity result that $\|u\|_{H^{k+1}}\lesssim \|f\|_{k-1}$ for the Laplace-Beltrami equation.

We have the following optimal error estimate for the trace \m{finite} element method for the eigenvalue problem.
\begin{mytheorem}\label{error_eig}
Let $\lambda$ be a nonzero eigenvalue of the problem~\eqref{eig_problem_weak} with multiplicity $m$. Let $u_i$, $i=1,\cdots, m,$
be orthogonal eigenfunctions corresponding to $\lambda$. We suppose that $u_i\in H^{k+1}(\Gamma)$.
Let $\lambda_{h,i},i=1,\cdots,m,$ be the  solutions of the discrete problem (\ref{eig_problem_discrete}) approximating to $\lambda$.
Then we have the following error estimates
$$
\left|\lambda-\hat{\lambda}_{h}\right| \lesssim h^{2 k} ,
$$
where $\hat{\lambda}_{h}=\frac{1}{m}\sum_{i=1}^m\lambda_{h,i}$.
%
\end{mytheorem}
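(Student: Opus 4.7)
The plan is to cast the estimate in the operator framework already set up in the excerpt and then invoke the classical Babuška--Osborn spectral approximation theory for compact self-adjoint operators. The nonzero eigenvalue $\lambda$ of \eqref{eig_problem_weak} corresponds to the eigenvalue $\mu=\lambda^{-1}$ of the solution operator $T\colon L^2_c(\Gamma)\to L^2_c(\Gamma)$ with multiplicity $m$ and eigenspace $E_\lambda=\operatorname{span}\{u_1,\ldots,u_m\}$, and the $m$ relevant discrete eigenvalues $\lambda_{h,i}$ correspond to $\mu_{h,i}=\lambda_{h,i}^{-1}$ of $T_h$. Since $T_h\to T$ in operator norm on $E_\lambda$ by Theorem \ref{error_estimates}, the Babuška--Osborn averaged-eigenvalue estimate gives
\[
\bigl|\mu-\hat\mu_h\bigr|
\lesssim
\frac{1}{m}\sum_{i,j=1}^{m}\bigl|\bigl((T-T_h)u_i,u_j\bigr)_{\Gamma}\bigr|
+\bigl\|(T-T_h)|_{E_\lambda}\bigr\|_{L^2(\Gamma)}^{2},
\]
where $\hat\mu_h=\frac{1}{m}\sum_i\mu_{h,i}$.

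The crux is to bound the cross pairings $((T-T_h)u_i,u_j)_\Gamma$. Using the identity $(f,u_j)_\Gamma=a(Tu_j,f)$ for any $f\in L^2_c(\Gamma)$ together with the Galerkin orthogonality $a(Tu_i-T_hu_i,v_h)=0$ for all $v_h\in V_{h,c}$, one obtains
\[
\bigl((T-T_h)u_i,u_j\bigr)_{\Gamma}
=a\bigl((T-T_h)u_i,\,Tu_j\bigr)
=a\bigl(Tu_i-T_hu_i,\,Tu_j-T_hu_j\bigr).
\]
Applying Cauchy--Schwarz in the $a(\cdot,\cdot)$-norm and the $H^1$-estimate of Theorem \ref{error_estimates}---which is available because $u_i\in H^{k+1}(\Gamma)$ forces $Tu_i=\lambda^{-1}u_i\in H^{k+1}(\Gamma)$---yields $|((T-T_h)u_i,u_j)_\Gamma|\lesssim h^{2k}$. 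The remaining term is $O(h^{2k+2})$ by the $L^2$ bound in the same theorem, and is therefore of higher order; hence $|\mu-\hat\mu_h|\lesssim h^{2k}$.

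To pass back to $\lambda$, I would Taylor-expand $\lambda_{h,i}=\lambda-\lambda^2(\mu_{h,i}-\mu)+O((\mu_{h,i}-\mu)^2)$ and average over $i$; since each $|\mu_{h,i}-\mu|$ tends to zero as $h\to 0$, the quadratic remainder is dominated by $|\mu-\hat\mu_h|$, whence $|\lambda-\hat\lambda_h|\lesssim h^{2k}$. The main technical obstacle will be the clean identification of which $m$ discrete eigenvalues form the cluster converging to $\lambda$---particularly in view of the singular algebraic structure of \eqref{AxLBx} discussed in Section 4---so that only the true eigenvalues of \eqref{true algebraic eigenvalue problem} enter the Babuška--Osborn machinery and no spurious eigenvalues from the embedded problem \eqref{eig_problem_discrete_ex} contaminate the average.
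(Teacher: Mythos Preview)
Your proof is correct and follows the same Babu\v{s}ka--Osborn route as the paper. The paper's argument is more streamlined: it invokes Corollary~9.6 of Boffi \cite{boffi2010finite} directly for $|\lambda-\hat\lambda_h|$, bounding it by $\bigl(\sup_{u\in E,\|u\|_{L^2}=1}\inf_{v_h\in V_{h,c}}\|u-v_h\|_{H^1}\bigr)^2$ and then applying the approximation Lemma~\ref{appro_H1}; this avoids both the cross-pairing identity and the $\mu\to\lambda$ Taylor expansion. Your version simply unpacks the mechanism behind that corollary---Galerkin orthogonality producing the doubled rate---and arrives at the same conclusion, so the two arguments are essentially equivalent.
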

\begin{proof}
By the definition of the operators $T$ and $T_h$, we can apply the Babuska-Osborn theory. Denote $E=\text{span}\{u_{1},\cdots, u_{m}\}$.
 By  Corollary 9.6. in \cite{boffi2010finite}, noticing that both $T$ and $T_h$ are self-adjoint,  we obtain
\begin{align*}
|\lambda-\hat{\lambda}_h|&\leq C \Big(\sup_{u\in E, \|u\|_{L^2(\Gamma)}=1} \inf_{v_h\in V_{h,c}}\|u-v_h\|_{H^1}\Big)^2 \\
 &\leq
C \Big(\sup_{u\in E, \|u\|_{L^2(\Gamma)}=1} h^k \|u\|_{H^{k+1}}\Big)^2 \lesssim
h^{2k}.
\end{align*}
\end{proof}
\begin{remark}
By definition, the error estimate holds for the nonzero true eigenvalues of the embedded problems \eqref{eig_problem_ex} and
\eqref{eig_problem_discrete_ex}.
In addition,  error analysis for eigenfunctions can be done in a standard way \cite{boffi2010finite},
which is omitted here for simplicity.
\end{remark}

\section{Numerical experiments}
In this section, we give some numerical examples to show the convergence behaviour of our method and compare with the original version of the trace finite element method. We implement the method in the finite element package DROPS \cite{DROPS} and use the package~PHG\cite{PHG} to do numerical integration on surfaces.

We first test our method by solving the Laplace-Beltrami equation~\eqref{m0}.

{\it Example 1.}
We solve the equation (\ref{m0}) on a unit sphere $\Gamma$:
$$\Gamma=\left\{x \in \mathbb{R}^{2}\ |\ \| x \|_2=1\right\}$$
which is contained in $\Omega :=[-2,2]^3$ and implicitly presented by the zero level of a function $\operatorname \varphi(x) = \|x\|_2-1$. We set $c=1$ and the right hand side term is given by
$
f(x)=\left(3 x_{1}^{2} x_{2}-x_{2}^{3}\right).
$
The solution of the equation is explicitly given by
\begin{equation*}
u(x)= \frac{|x|^{2}}{12+|x|^{2}}\left(3 x_{1}^{2} x_{2}-x_{2}^{3}\right), \quad x \in \Gamma.
\end{equation*}

For the triangulation we partition $\Omega$ uniformly into $N^3$ cubes and then each of them is subdivided into six tetrahedra.
We solve the finite element problem~\eqref{weak_form_apxm_ex} to get the numerical solution. In comparison, we also solve the problem~\eqref{m0}
by using the isoparametric trace finite element method \cite{grande2016higher}.
We compute the numerical errors in $L^2$ and $H^1$ norms. The  experimental orders of convergence (EOC) are computed accordingly.

A numerical solution by our method is shown in Figure~\ref{LB_equ_accuracycheck} ($k=2$, $N=64$). The numerical errors are given in Table~\ref{e1P1} and \ref{e1P2}.
 It is found that the geometrically consistent method (exTraceFEM) has optimal  convergence rate, the same as the isoparametric method (isoTraceFEM). We can also see that the errors by the exTraceFEM is almost the same as
that by the isoTraceFEM on a triangulation with the same mesh size when $k=1$(linear finite element case). This implies that the geometric error is neglectable for the low order method. However,  for a higher order case ($k=2$), the numerical errors of exTraceFEM is much smaller than those of the isoparametric finite element method. This indicates that the effect of the geometric consistency is significant for
high order finite element methods.
\begin{figure}[!htp]
    \centering
    \includegraphics[width=7cm]{./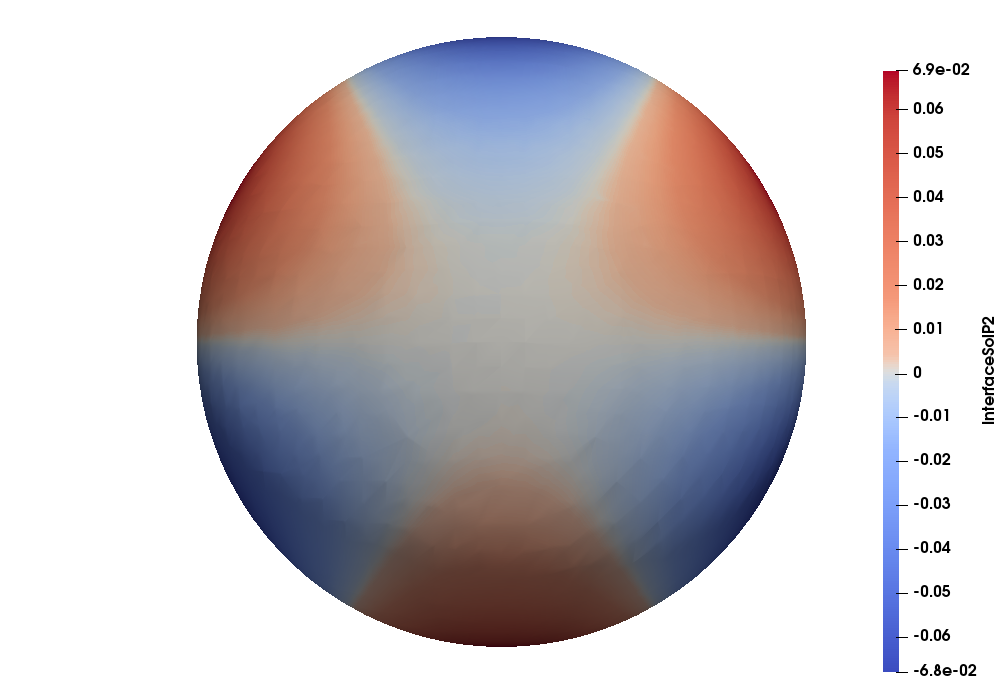}
    \caption{Numerical solution of the LB equation on a spherical surface}\label{LB_equ_accuracycheck}
\end{figure}

\begin{table}[H]
	\centering
    \caption{Comparison of exTraceFEM and isoparametric TraceFEM (Example 1, $k=1$).}\label{e1P1}
	\begin{tabular}{c|cc|cc|cc|cc}
		\hline
        \hline
		&\multicolumn{4}{|c|}{isoTraceFEM} & \multicolumn{4}{|c}{exTraceFEM} \\
		\hline
		$N$ & $E(H_1)$ & EOC & $E(L_2)$ & EOC  & $E(H_1)$ & EOC& $E(L_2)$ & EOC\\
		\hline
        4     & 8.73E-1       & - &4.88E-2 & -           & 9.23E-1 & -&6.68E-2 & -\\
        8 & 4.62E-1       & 0.92  &1.99E-2 & 1.29    & 4.76E-1 & 0.96 &2.05E-2 & 1.70\\
        16  & 2.16E-1       & 1.10&5.32E-3 & 1.90     & 2.20E-1 & 1.11&5.12E-3 & 2.00\\
        32 & 1.04E-1       & 1.05 &1.38E-3 & 1.95    & 1.04E-1 & 1.08&1.30E-3 & 1.98 \\
        64  & 5.18E-2       & 1.00 &3.33E-4 & 2.05   & 5.22E-2 & 0.99&3.13E-4 & 2.05 \\
        \hline
	\end{tabular}
\end{table}

\begin{table}[H]
	\centering
    \caption{Comparison of exTraceFEM and isoparametric TraceFEM (Example. 1, $k=2$).}\label{e1P2}
	\begin{tabular}{c|cc|cc|cc|cc}
		\hline
        \hline
		&\multicolumn{4}{|c|}{isoTraceFEM} & \multicolumn{4}{|c}{exTraceFEM} \\
		\hline
		$N$ & $E(H_1)$ & EOC &$E(L_2)$ & EOC  & $E(H_1)$ & EOC& $E(L_2)$ & EOC \\
		\hline
        4    & 2.52E-1       & -    &4.57E-2 & -  & 1.70E-1 & - &      1.91E-1 & - \\
        8   & 4.26E-2       & 2.56 &3.05E-3 & 3.91   & 1.08E-2 & 3.98&5.46E-4 & 8.45 \\
        16  & 9.94E-3       & 2.10 &3.48E-4 & 3.13   & 2.56E-3 & 2.08&6.49E-5 & 3.07 \\
        32  & 2.61E-3       & 1.92 &4.76E-5 & 2.87   & 6.61E-4 & 1.95&8.71E-6 & 2.90 \\
        64  & 6.55E-4       & 1.99  &5.98E-6 & 2.99   & 1.64E-4 & 2.01&1.07E-6 & 3.03\\
        \hline
	\end{tabular}
\end{table}

We then present some numerical examples for the \m{Laplace-Beltrami} eigenvalue problem.

{\it Example 2.} In this example, we consider the eigenvalue problem~\eqref{eig_problem} on the unit spherical surface.
It is well known that the eigenvalues of the Laplace-Beltrami operator on the surface is  given by
$$\lambda^m = \{m(m-1), m\geq 1\}$$
 with a multiplicity of $p_m = 2m-1$. 
  We solve the problem (\ref{eig_problem_discrete_ex}) numerically.
 Meanwhile, we also solve the Laplace-Beltrami eigenvalue problem using a standard trace finite element method in \cite{olshanskii2009finite}.

\m{We first compute the generalized eigenvalues of $A$ and $B$ using the Cholesky factorization method.}
  The numerical results
 are shown in Table~\ref{ISO_k2N32old} and Table~\ref{NISO_k2N32old}, which
 are respectively for the standard trace finite element method and the geometrically consistent method. For simplicity, we only show the
 first six eigenvalues. Here we choose $k=2$ and $N=32$. There are 2604 freedoms totally in  the finite element space $W_h$. Our numerical results show that there are 448 false eigenvalues for the standard trace finite element method.
 This is actually \m{because} the dimension of $W_h$ is much larger than \m{that of} the space $V_h$ on a discretized surface $\Gamma_h$, as discussed in Section 3.
 Moreover,  the geometrically consistent trace finite element method  behaves much better than the standard trace finite element method in this case.
 There seems only one false eigenvalue $0.0069093$ which is  close to the trivial eigenvalue $0$. This is consistent with our theoretical result in Lemma~\ref{con_wellpose}. 
 \m{In the same bulk finite element space $W_h$, $\operatorname{dim}$$(V_h)$ of our method is much larger than that of the standard traceFEM, which indicates that the geometrically consistent method can generate much less false eigenvalues.}

We then use the rank-completing perturbation algorithm \cite{hochstenbach2019solving} to compute the finite eigenvalues of the generalized eigenvalue problem~\eqref{AxLBx}.
The numerical results by the standard method(traceFEM) and our method (exTraceFEM) ($k=2$ and $N=32$) are shown in Table~\ref{ISO_k2N32} and Table~\ref{NISO_k2N32}, respectively.
We can see that the false eigenvalues have been selected out as discussed in Section 4. Both the eigenvalues and their multiplicity are computed correctly.
Similar to that for Laplace-Beltrami equation, the accuracy of the geometrically consistent method  is much better than the standard method.
\begin{table}[]\scriptsize
\caption{traceFEM method when $k=2,N=32$}\label{ISO_k2N32old}
\begin{tabular}{{p{0.12\columnwidth}|p{0.8\columnwidth}}}
\hline\hline\vspace{0.01cm}
{\small Eigenvalue} & \vspace{0.01cm}{\small Numerical solutions}
\\ \hline
\vspace{0.01cm}0               &-1.8899201033073 -1.65160247438363 -1.60753159863045 $\cdots$
1.146997797544159 1.173366051077160
1.404691246035232
 \\ \hline
2               &  2.000002341879660 \ 2.000002341895009 \ 2.000010968828590 \  2.008644495856585                                                                                                                                                                                                                                                                                                                                                                         \\ \hline
\vspace{0.01cm}6               & 6.000032528374658  6.000032528374662  6.000062692154771  6.000091817982969 6.000091817982981                                                                                                                                                                                                                                                                                                                                           \\ \hline
\vspace{0.01cm}12              & 12.000169159806013  12.000181120960798  12.000402418432977 12.000402418432987 12.000526221431546 12.000526221431546 12.000714928288357                                                                                                                                                                                                                                                                                                \\ \hline
\vspace{0.03cm}20              & 20.000639807964866 20.000639807964877 20.001210428217536 20.001433468889189 20.002056244148882 20.002056244148889 20.002704995918332 20.002777886515734 20.002777886515734                                                                                                                                                                                                                                                          \\ \hline
\vspace{0.03cm} 30              & 30.001877620894145 30.001877620894149 30.003586778395437 30.003586778395444 30.005578378716464 30.006082304547288 30.007563415744823 30.007563415744837 30.008657097319883 30.008657097319894 30.009729163449663                                                                                                                                                                                                                    \\ \hline
\end{tabular}
\end{table}

\begin{table}[]\scriptsize
\caption{exTraceFEM method when $k=2,N=32$}\label{NISO_k2N32old}
\begin{tabular}{{p{0.12\columnwidth}|p{0.8\columnwidth}}}
\hline\hline\vspace{0.01cm}
{\small Eigenvalue} & \vspace{0.01cm}{\small Numerical solutions}
\\ \hline
0               &-4.5702e-11        0.0069093                                                                                                                                                                                                                                                                                                                                                                                                            \\ \hline
2               & 1.999999999970570 2.000000000028308 2.000000000051764                                                                                                                                                                                                                                                                                                                                                                               \\ \hline
\vspace{0.01cm}6               & 5.999999999933209 5.999999999940492 5.999999999994404 6.000000000001932 6.000000000036823                                                                                                                                                                                                                                                                                                                                           \\ \hline
\vspace{0.01cm}12              & 12.000011666430954 12.000020383250462 12.000024586245313 12.000024898829679 12.000024898928142 12.000034279424140 12.000034279500648                                                                                                                                                                                                                                                                                                \\ \hline
\vspace{0.03cm}20              & 20.000121867977601 20.000121867991975 20.000131606614801 20.000188235821870 20.000188235928746 20.000189756600580 20.000204020147969 20.000235383589722 20.000235383594994                                                                                                                                                                                                                                                          \\ \hline
\vspace{0.03cm}30              & 30.000514523042980 30.000534231291937 30.000534231297287 30.000635177166853 30.000635177208498 30.000771041561322 30.000771041608086 30.000836697770890 30.000836697819057 30.000856733024339 30.001005770449328                                                                                                                                                                                                                    \\ \hline
\end{tabular}
\end{table}

\begin{table}[]\scriptsize
\caption{traceFEM method when $k=2,N=32$}\label{ISO_k2N32}
\begin{tabular}{{p{0.12\columnwidth}|p{0.8\columnwidth}}}
\hline\hline\vspace{0.01cm}
{\small Eigenvalue} & \vspace{0.01cm}{\small Numerical solutions}
\\ \hline
0               &-6.44833099219222e-13                                                                                                                                                                                                                                                                                                                                                                                               \\ \hline
2               &2.000002341879660 2.000002341895009 2.000010968828590                                                                                                                                                                                                                                                                                                                                                                      \\ \hline
\vspace{0.01cm}6               & 6.000032528374658 6.000032528374662 6.000062692154771 6.000091817982969 6.000091817982981                                                                                                                                                                                                                                                                                                                                           \\ \hline
\vspace{0.01cm}12              & 12.000169159806013 12.000181120960798 12.000402418432977 12.000402418432987 12.000526221431546 12.000526221431546 12.000714928288357                                                                                                                                                                                                                                                                                                \\ \hline
\vspace{0.03cm}20              & 20.000639807964866 20.000639807964877 20.001210428217536 20.001433468889189 20.002056244148882 20.002056244148889 20.002704995918332 20.002777886515734 20.002777886515734                                                                                                                                                                                                                                                          \\ \hline
\vspace{0.03cm}30              & 30.001877620894145 30.001877620894149 30.003586778395437 30.003586778395444 30.005578378716464 30.006082304547288 30.007563415744823 30.007563415744837 30.008657097319883 30.008657097319894 30.009729163449663                                                                                                                                                                                                                    \\ \hline
\end{tabular}
\end{table}

\begin{table}[]\scriptsize
\caption{exTraceFEM method when $k=2,N=32$}\label{NISO_k2N32}
\begin{tabular}{{p{0.12\columnwidth}|p{0.8\columnwidth}}}
\hline\hline\vspace{0.01cm}
{\small Eigenvalue} & \vspace{0.01cm}{\small Numerical solutions}
\\ \hline
0               &-4.5702e-11                                                                                                                                                                                                                                                                                                                                                                                                         \\ \hline
2               & 1.999999999970570 2.000000000028308 2.000000000051764                                                                                                                                                                                                                                                                                                                                                                               \\ \hline
\vspace{0.01cm}6               & 5.999999999933209 5.999999999940492 5.999999999994404 6.000000000001932 6.000000000036823                                                                                                                                                                                                                                                                                                                                           \\ \hline
\vspace{0.01cm}12              & 12.000011666430954 12.000020383250462 12.000024586245313 12.000024898829679 12.000024898928142 12.000034279424140 12.000034279500648                                                                                                                                                                                                                                                                                                \\ \hline
\vspace{0.03cm}20              & 20.000121867977601 20.000121867991975 20.000131606614801 20.000188235821870 20.000188235928746 20.000189756600580 20.000204020147969 20.000235383589722 20.000235383594994                                                                                                                                                                                                                                                          \\ \hline
\vspace{0.03cm}30              & 30.000514523042980 30.000534231291937 30.000534231297287 30.000635177166853 30.000635177208498 30.000771041561322 30.000771041608086 30.000836697770890 30.000836697819057 30.000856733024339 30.001005770449328                                                                                                                                                                                                                    \\ \hline
\end{tabular}
\end{table}

We also compute the convergence rate of the eigenvalues. Let $\lambda^{m}(m=1,2,\cdots)$ be \m{the m-th eigenvalue} of the Laplace-Beltrami eigenvalue problem (\ref{eig_problem}),
and $p_m$ is the multiplicity of $\lambda^m$.
We use $\{\lambda_{h,1}^m,\lambda_{h,2}^m,\cdots,\lambda_{h,p_m}^m\}$ to represent the eigenvalues \m{approximating $\lambda^m$} by the discrete eigenvalue problem (\ref{eig_problem_discrete}).
The numerical error is calculated as,
\begin{equation}\label{err_def}
  \operatorname{Error(\lambda^m)} = \frac{1}{p_m}{\sum_{i=1}^{p_m} |\lambda^m-\lambda^m_{h,i}|}.
\end{equation}
We compute the numerical errors for both the standard method and our method.
The results for the cases $k=1$ and $k=2$ are shown in Figure \ref{k1_error} and Figure \ref{k2_error}, respectively. We show the convergence for the first ten eigenvalues for simplicity.
In general the experimental order of convergence is  $2$ in the linear finite element case($k=1$) and
 the order is $4$  when $k=2$, except for the first a few eigenvalues, where the error is close to the machine accuracy. The optimal convergence orders agree with the  error estimates in the previous section.
In addition, some typical eigenfunctions are shown in Figure~\ref{eig1to6}.

\begin{figure}[!h]
  \centering
  \includegraphics[width=6.1cm]{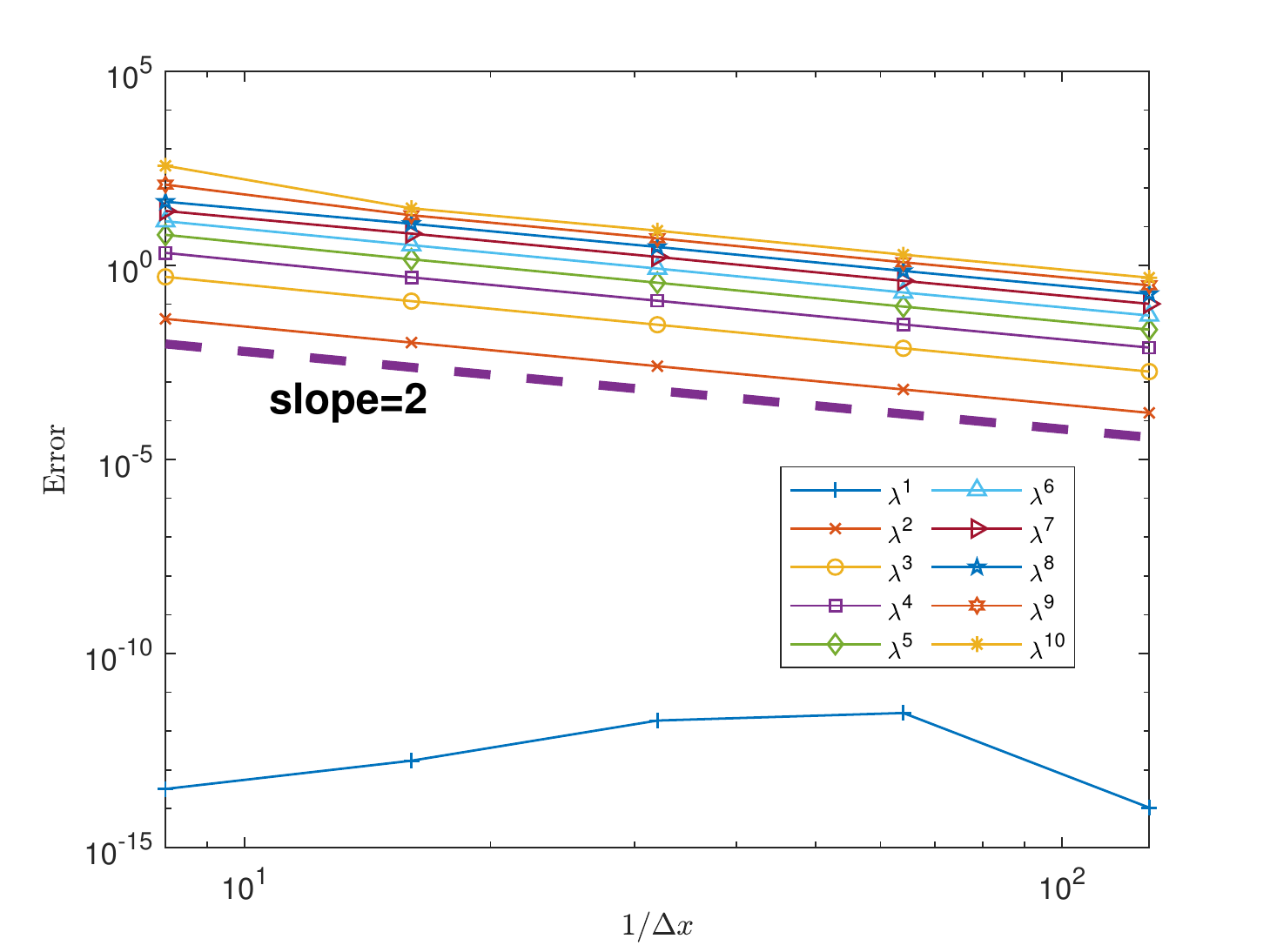}\qquad
  \includegraphics[width=6.1cm]{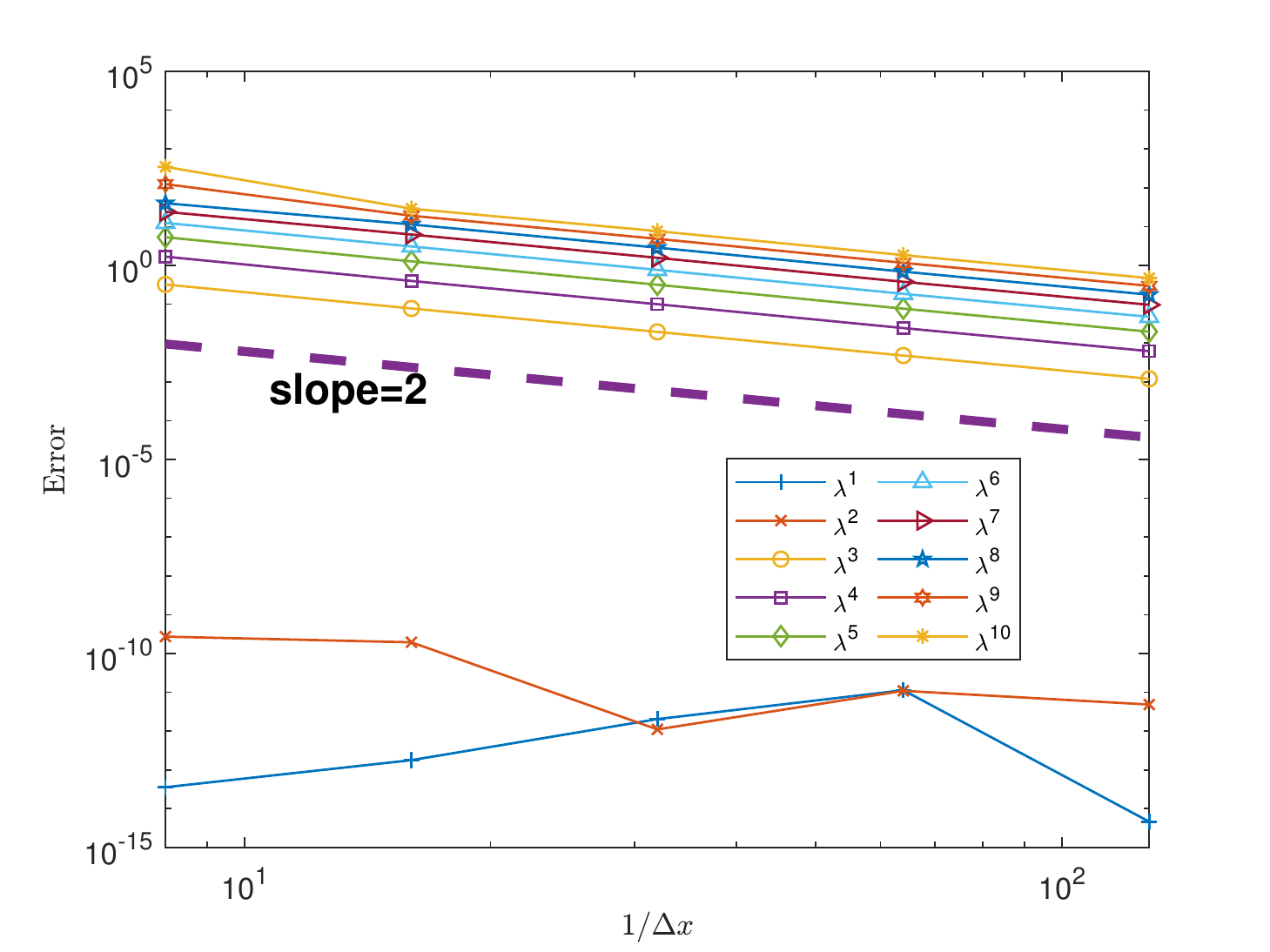}
  \caption{The errors of traceFEM (left) and exTraceFEM (right) for the Laplace-Beltrami eigenvalue problem(Example 2, k=1).}\label{k1_error}
\end{figure}
\begin{figure}[!h]
  \centering
  \includegraphics[width=6.1cm]{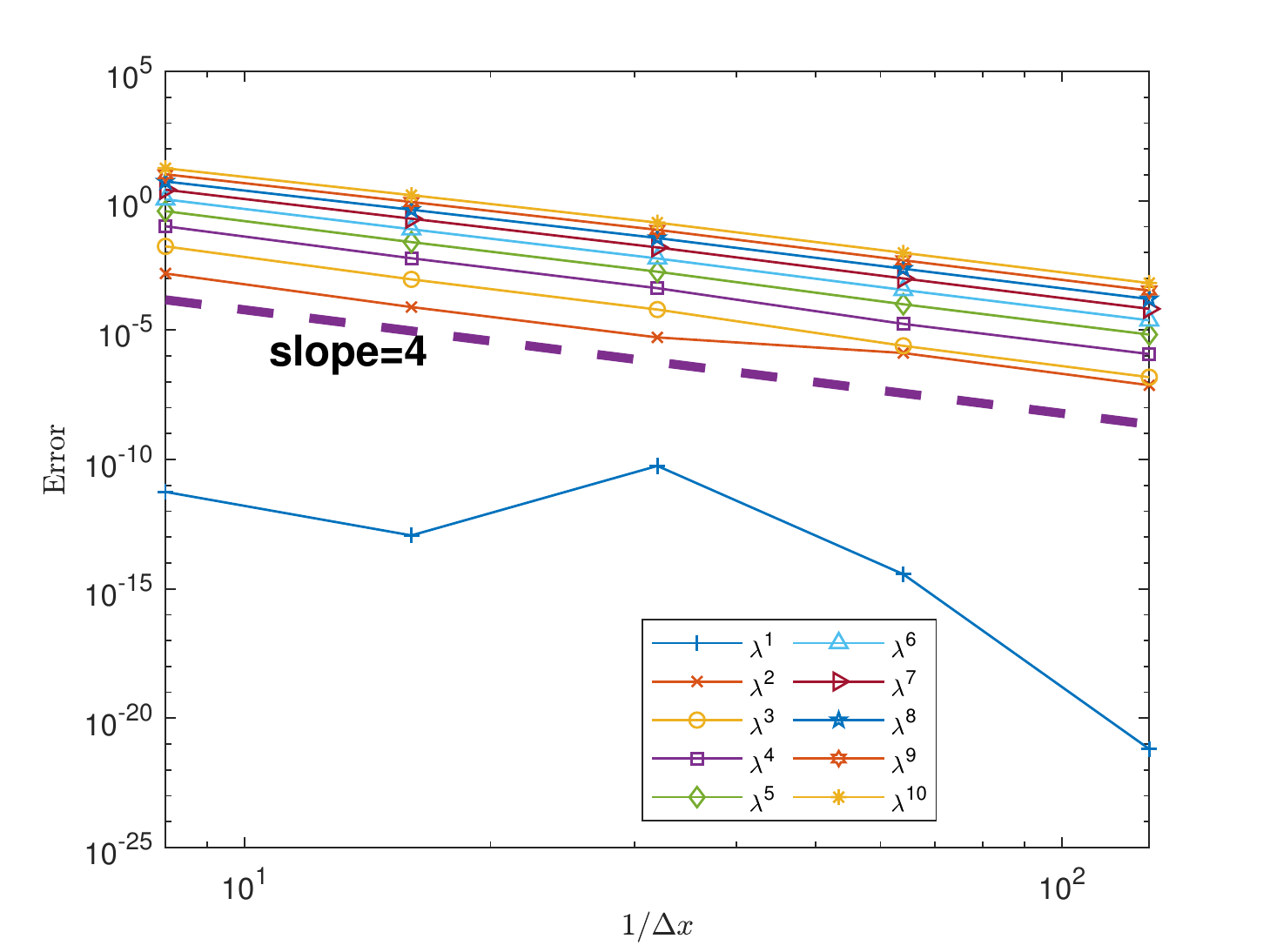}\qquad
  \includegraphics[width=6.1cm]{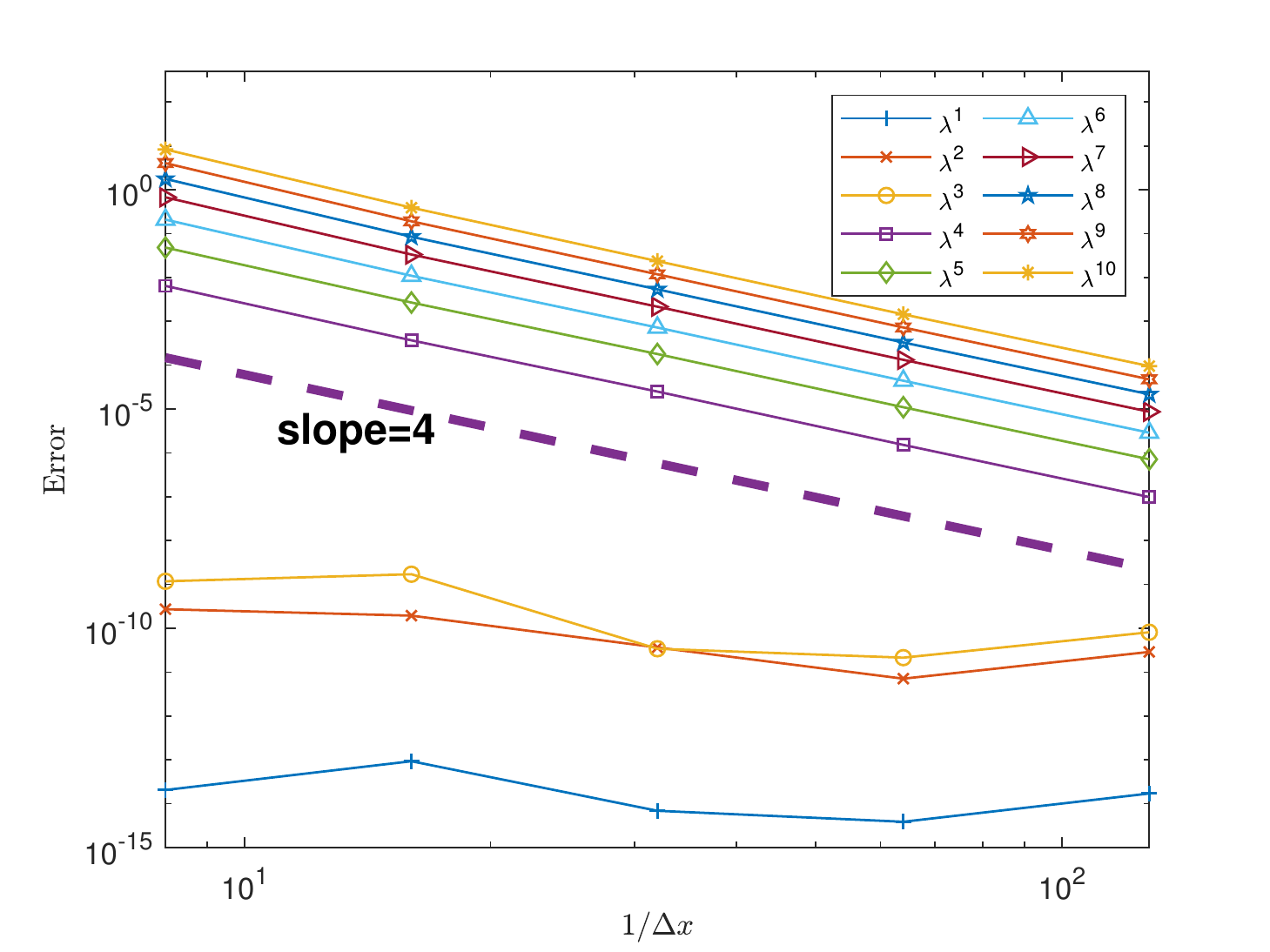}
  \caption{The errors of traceFEM (left) and exTraceFEM (right) for the Laplace-Beltrami eigenvalue problem(Example 2, k=1).}\label{k2_error}
\end{figure}

%
%

\begin{figure}
\centering
\subfigure[$\lambda_{h,1}\approx 0  $]{
\includegraphics[width=0.23\linewidth]{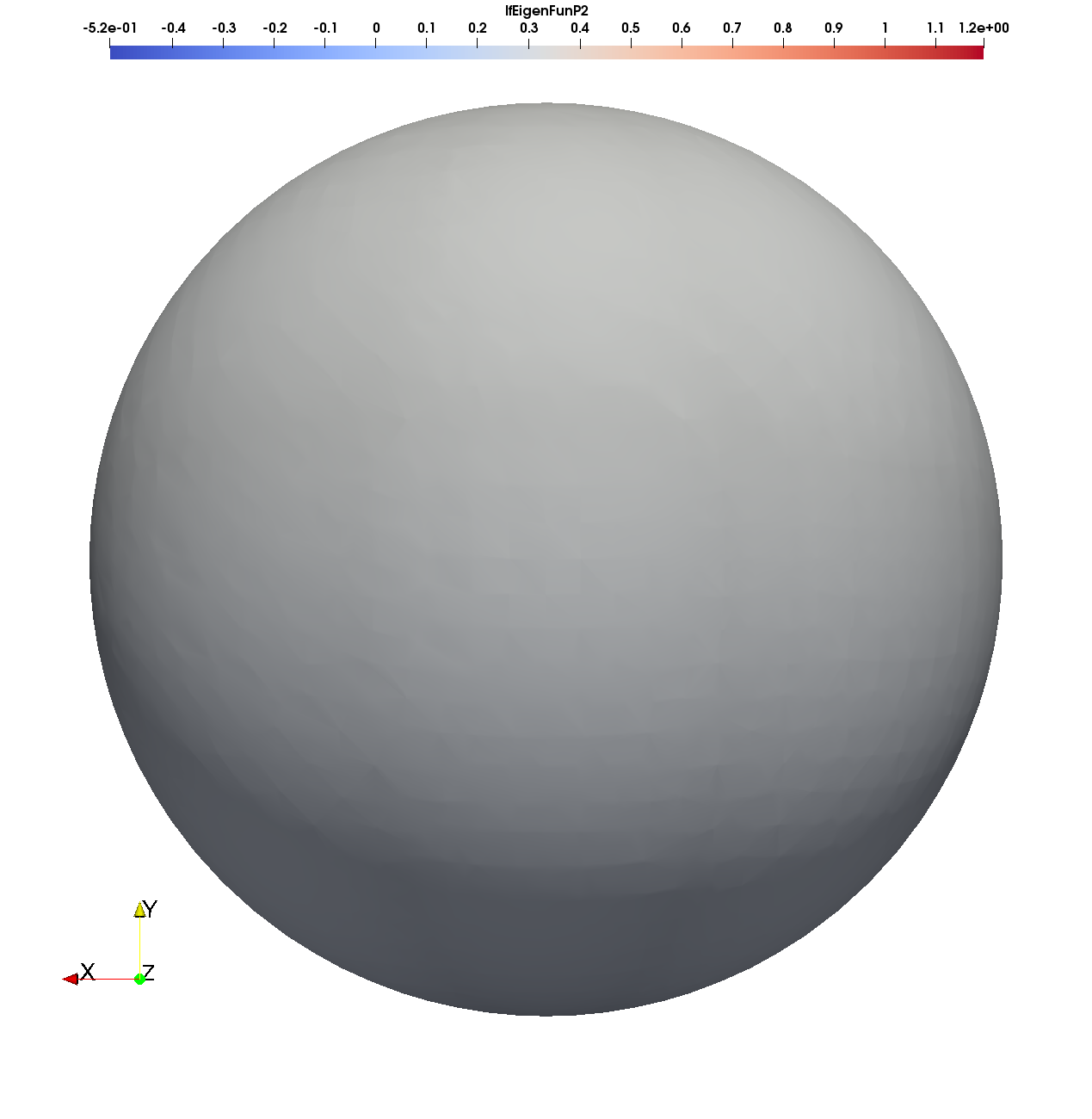}}
\hspace{0.01\linewidth}
\subfigure[$\lambda_{h,2} \approx 2 $]{
\includegraphics[width=0.23\linewidth]{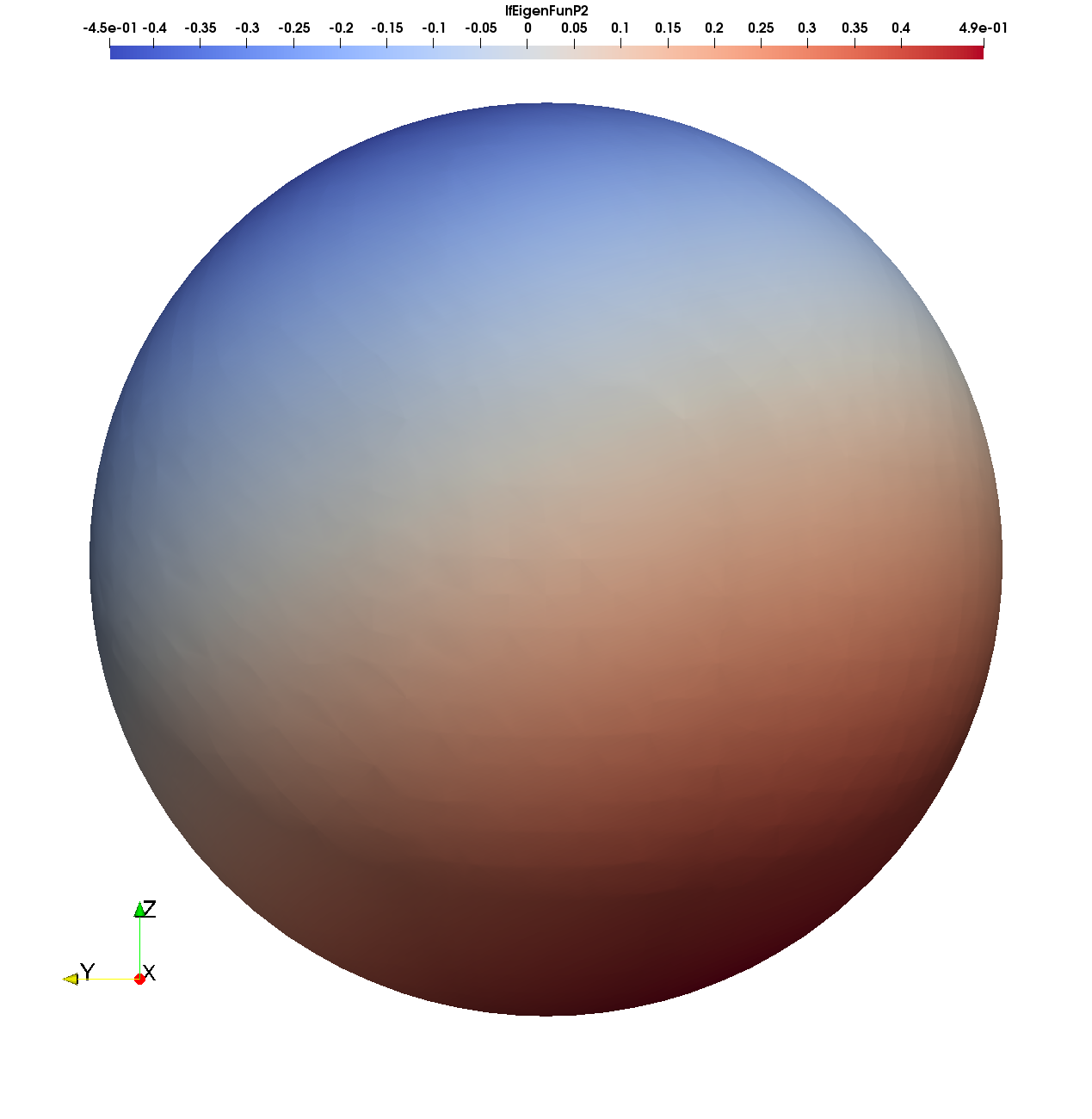}}
\hspace{0.01\linewidth}
\subfigure[$\lambda_{h,3}  \approx 2 
$]{
\includegraphics[width=0.23\linewidth]{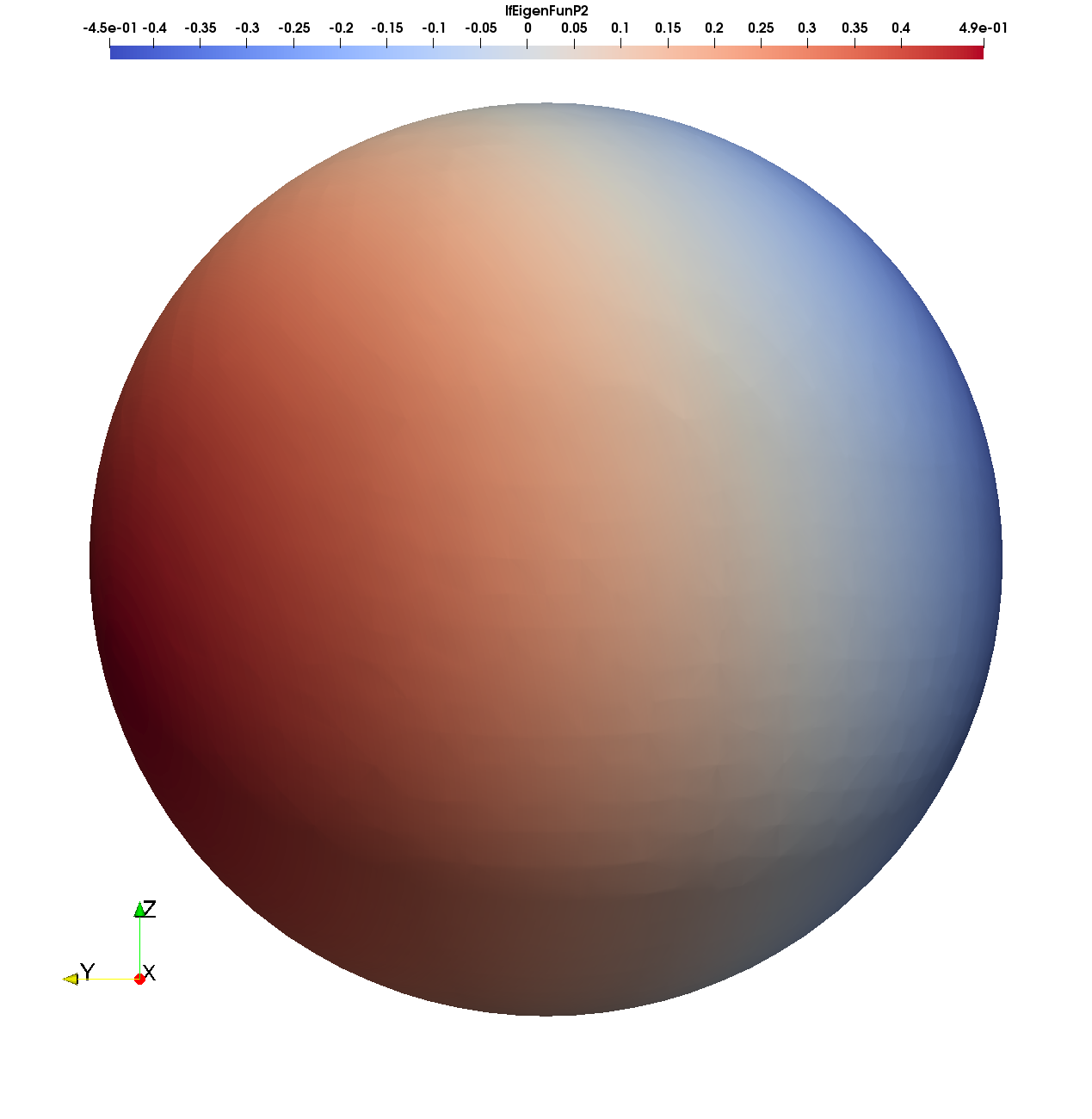}}
\subfigure[$\lambda_{h,4}  \approx 2
$]{
\includegraphics[width=0.23\linewidth]{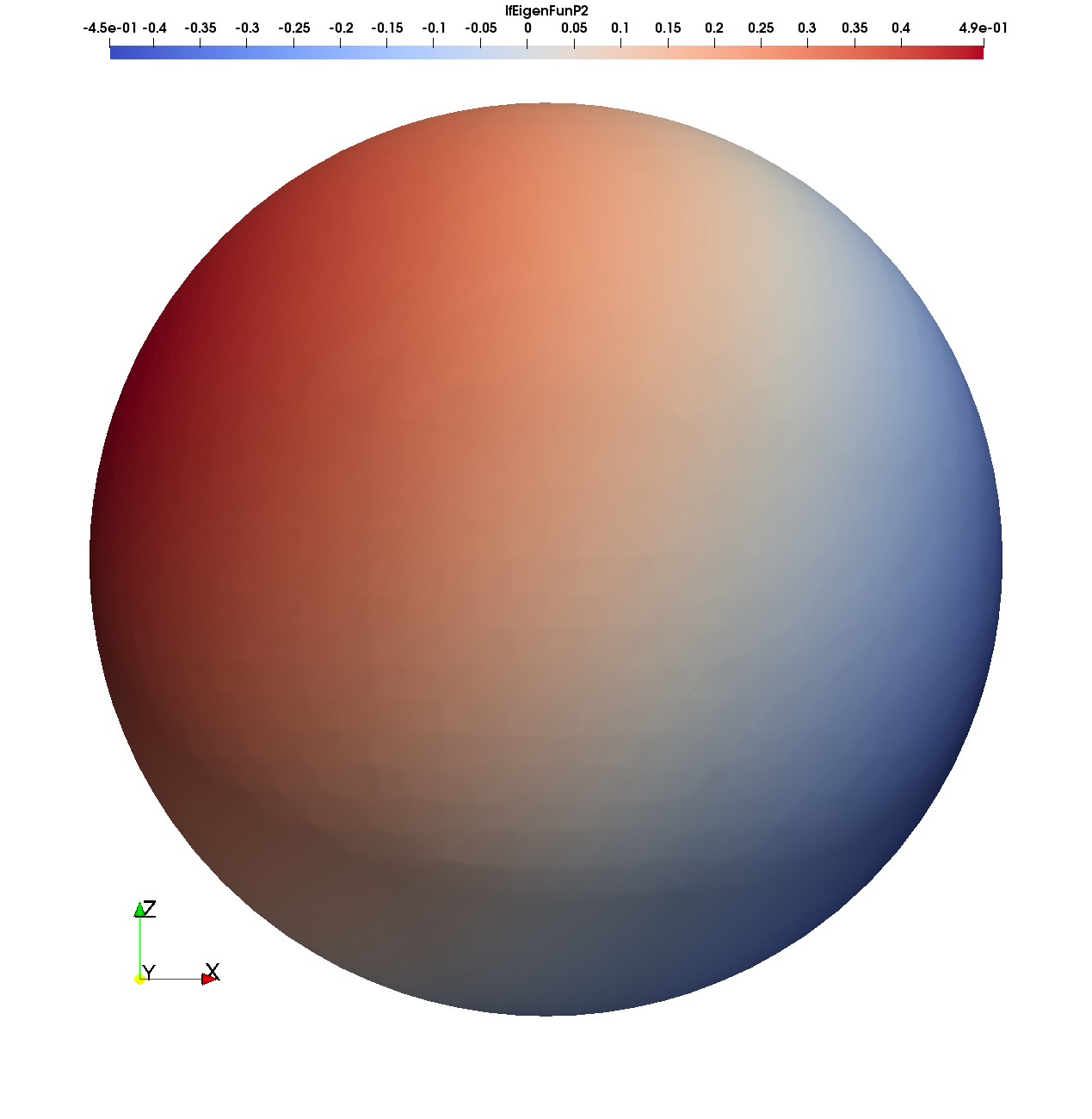}}
\hspace{0.01\linewidth}
\subfigure[$\lambda_{h,5} \approx 6 
$]{
\includegraphics[width=0.23\linewidth]{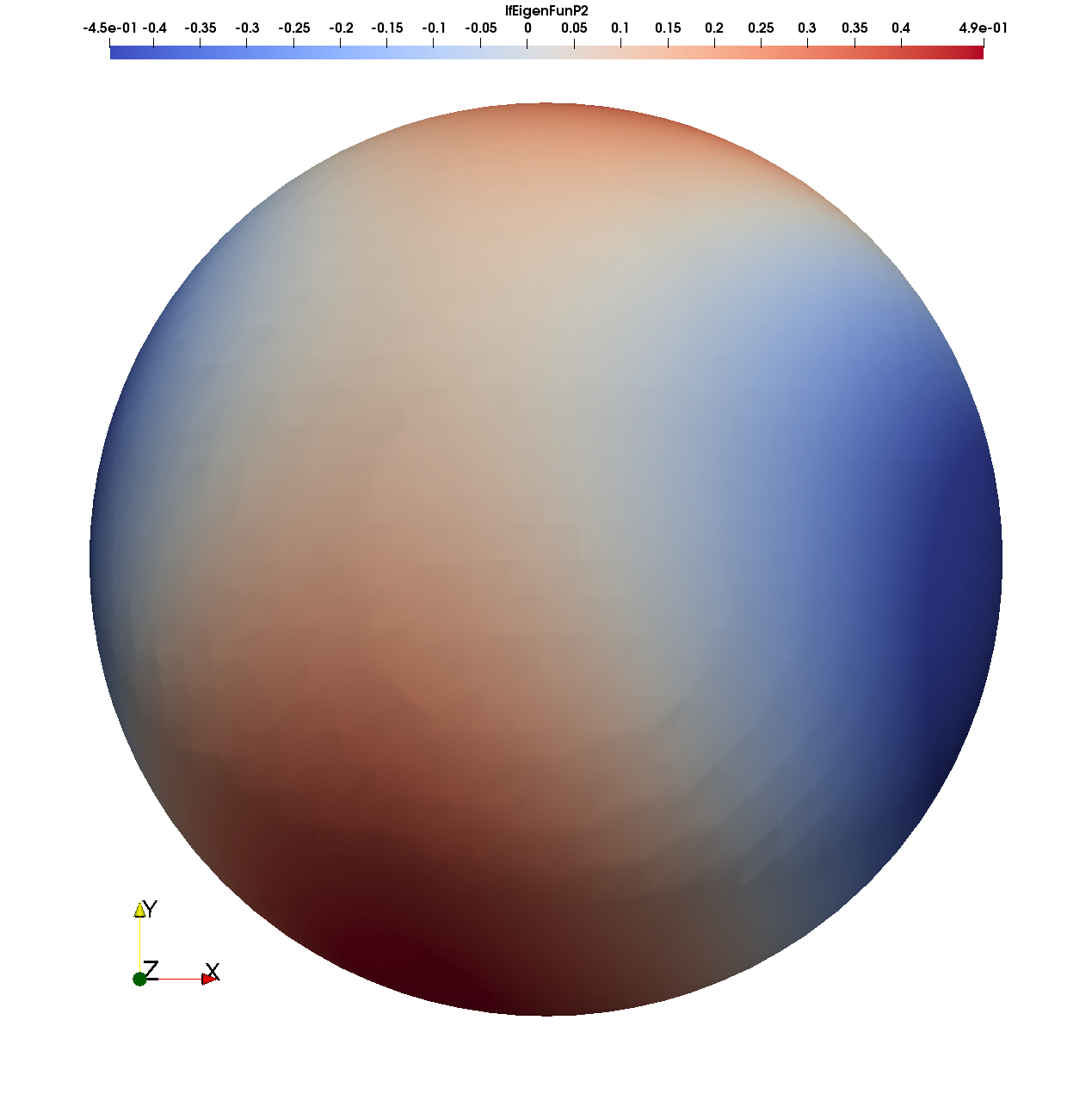}}
\hspace{0.01\linewidth}
\subfigure[$\lambda_{h,6} \approx 6 
$]{
\includegraphics[width=0.23\linewidth]{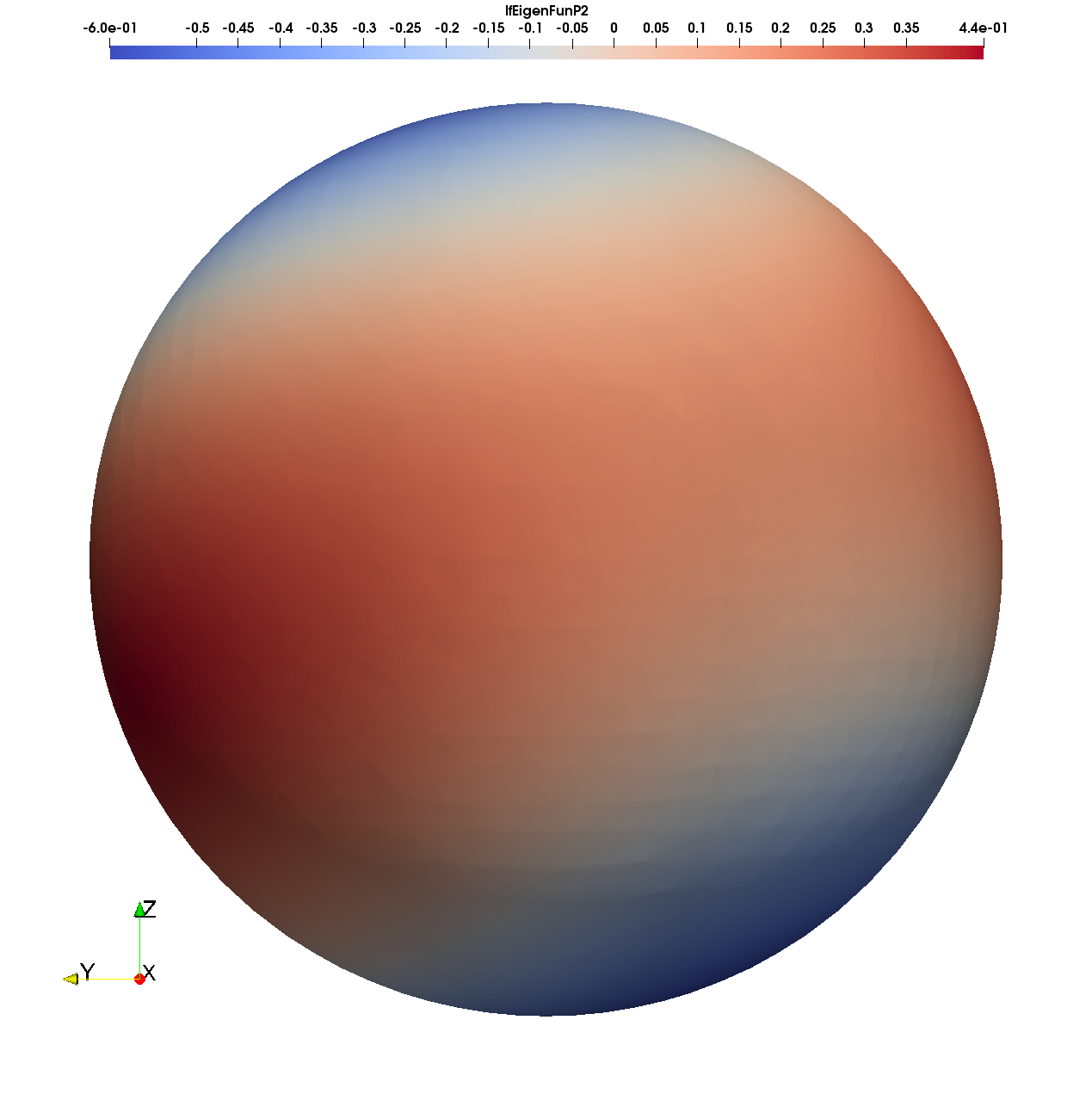}}
\subfigure[$\lambda_{h,6} \approx 6 
$]{
\includegraphics[width=0.23\linewidth]{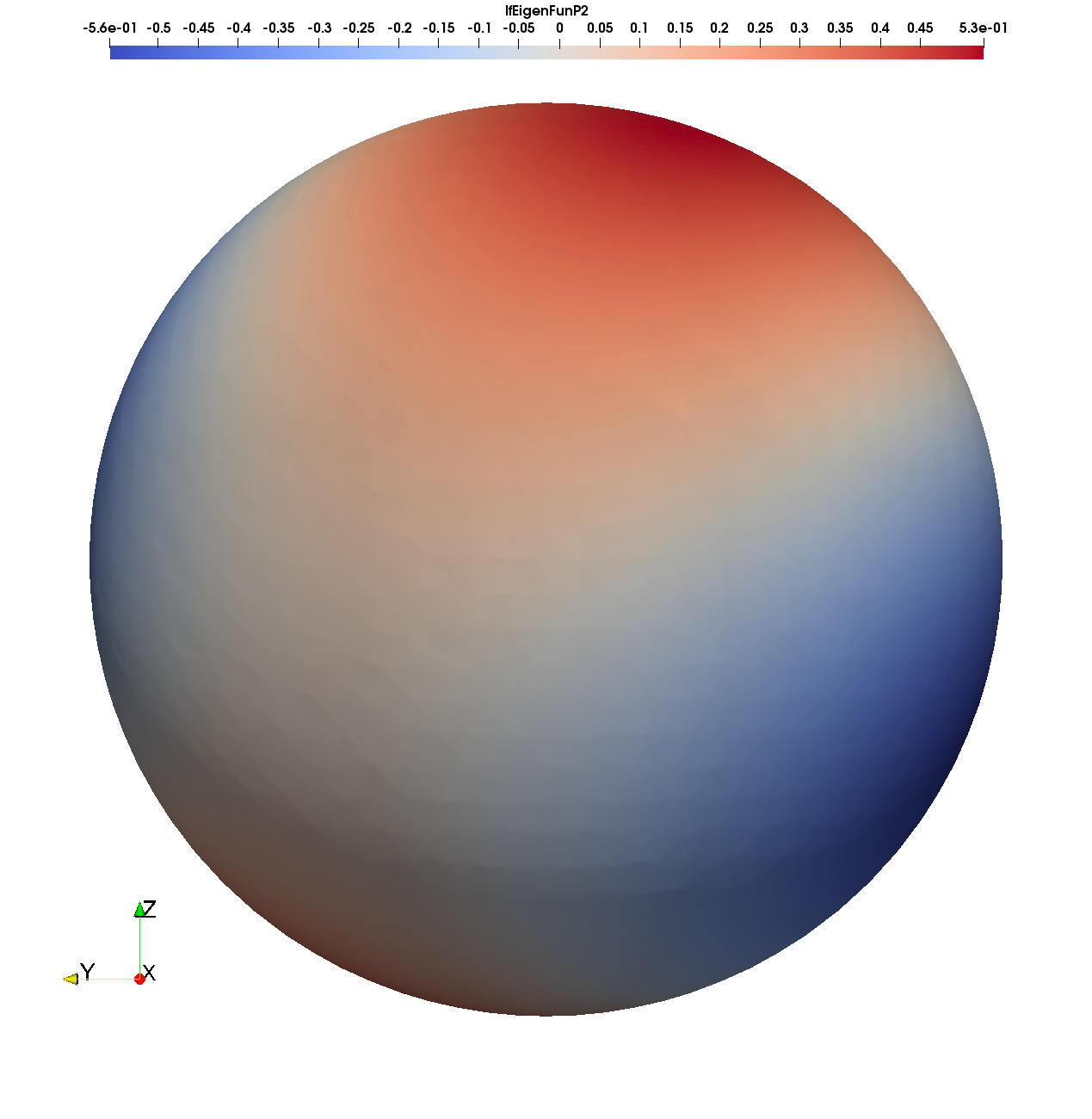}}
\hspace{0.01\linewidth}
\subfigure[$\lambda_{h,7} \approx 6 
$]{
\includegraphics[width=0.23\linewidth]{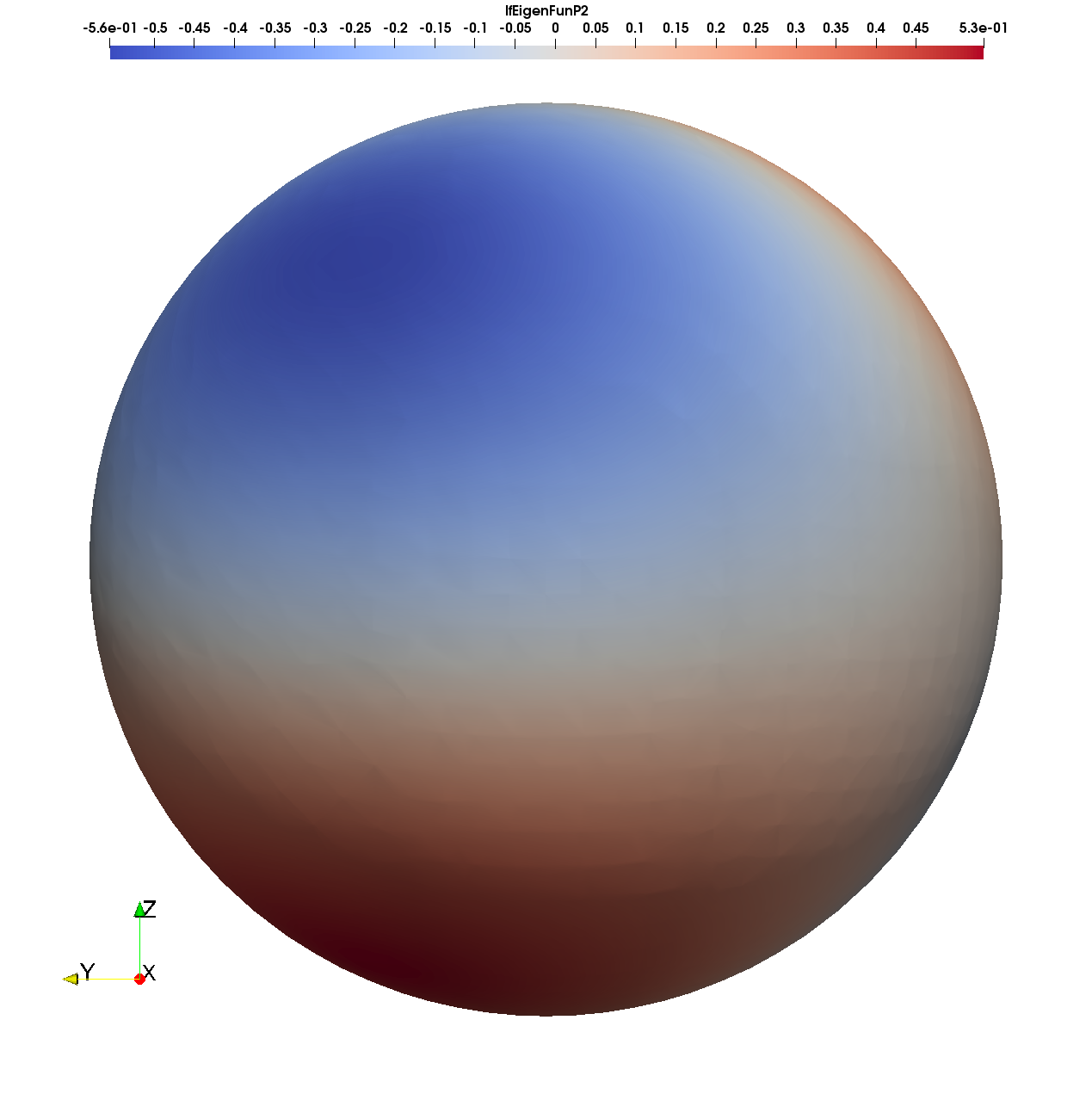}}
\hspace{0.01\linewidth}
\subfigure[$\lambda_{h,8} \approx 6 
$]{
\includegraphics[width=0.23\linewidth]{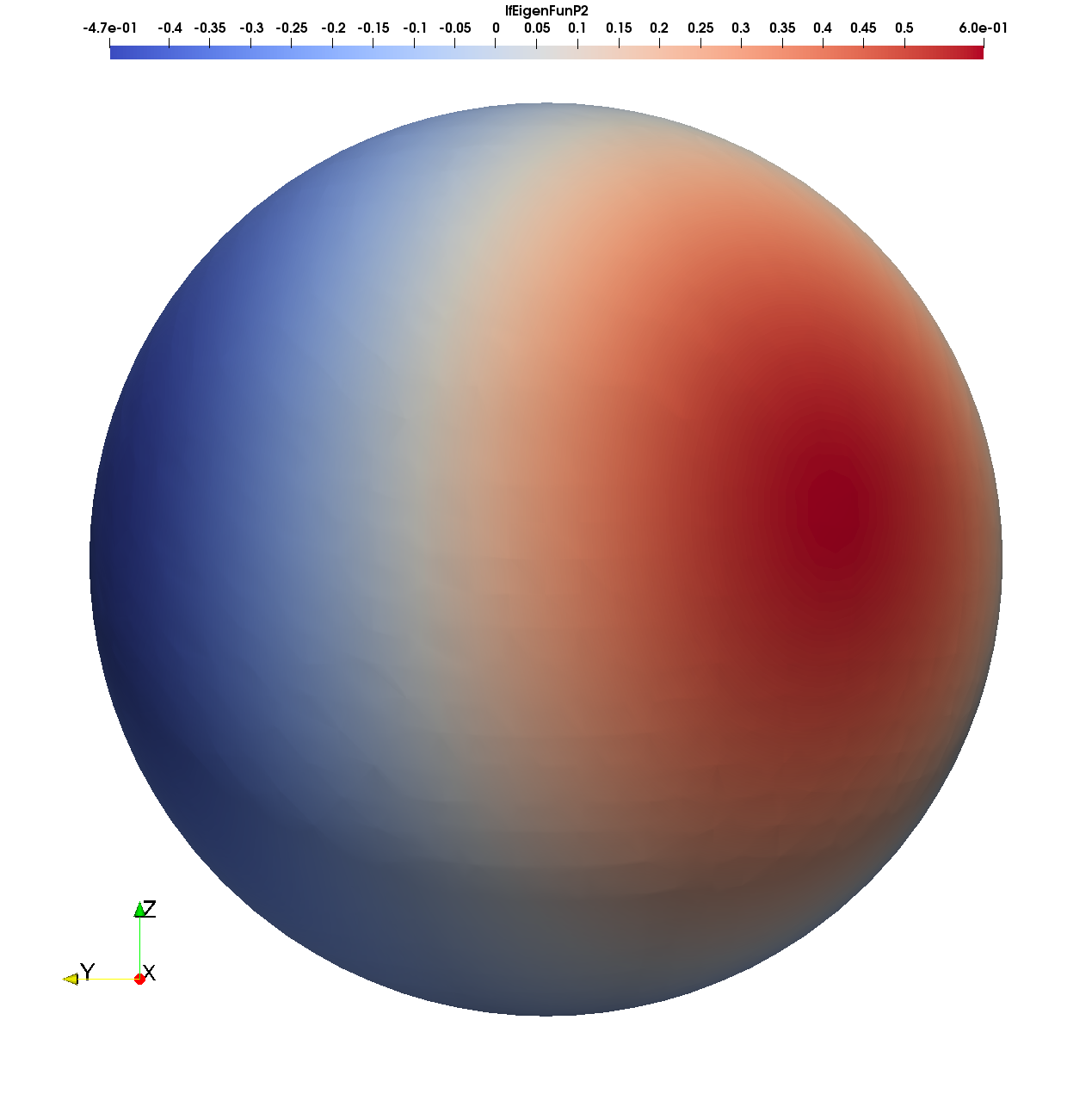}}
\caption{The eigenfunctions for the \m{Laplace-Beltrami} operator on the unit spherical surface.}\label{eig1to6}
\end{figure}


{\it Example 3.} In the last example, we solve the Laplace-Beltrami eigenvalue problem on a more general surface. Consider \m{a tooth-shaped} surface which is the zero  level set of a function
\begin{equation}
\varphi(x,y, z)=\frac{256\,x^4}{625}-\frac{16\,x^2}{25}+\frac{256\,y^4}{625}-\frac{16\,y^2}{25}+\frac{256\,z^4}{625}-\frac{16\,z^2}{25}.
\end{equation}
By using our method, the set of true eigenvalues and the corresponding eigenfunctions on the surface are shown in Figure~\ref{tooth eigenvalues}($k=1,N=32$). We show only the first six ones  for simplicity. We can see that our method works well for this problem.
\begin{figure}
\centering
\subfigure[$\lambda_{h,1} = 9.0291211439e-12$]{
\includegraphics[width=0.34\linewidth]{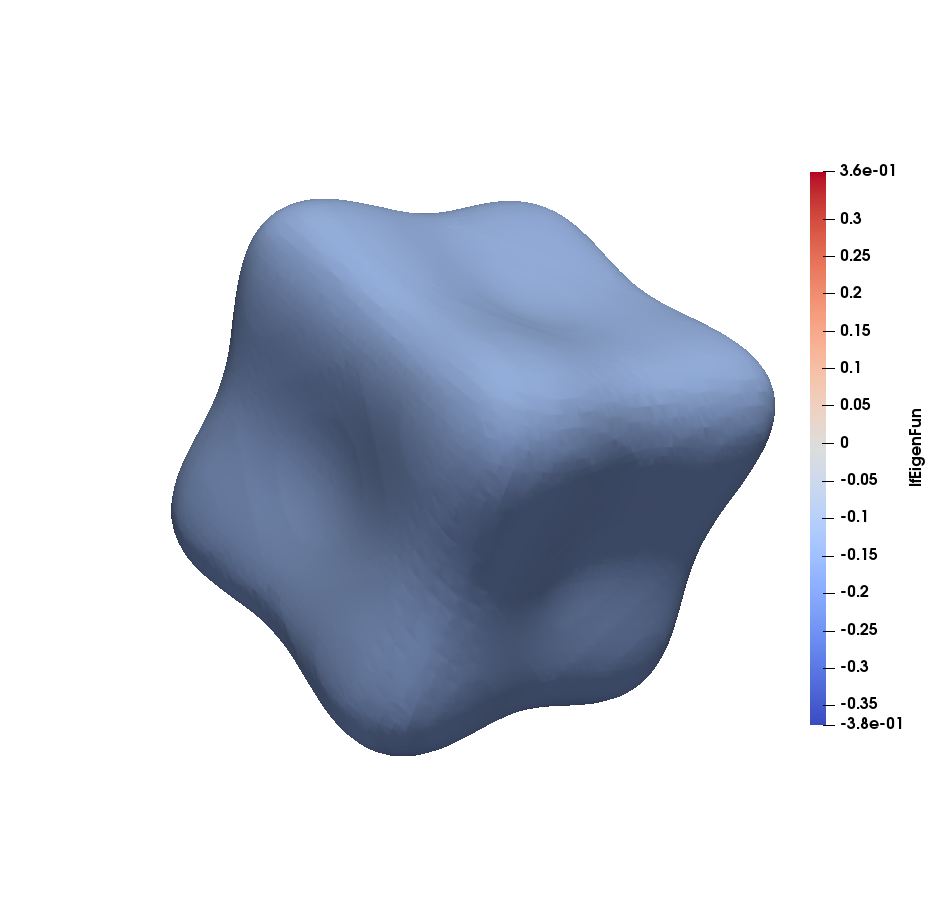}}
\hspace{0.01\linewidth}
\subfigure[$\lambda_{h,2}  = 0.5915189071$]{
\includegraphics[width=0.3\linewidth]{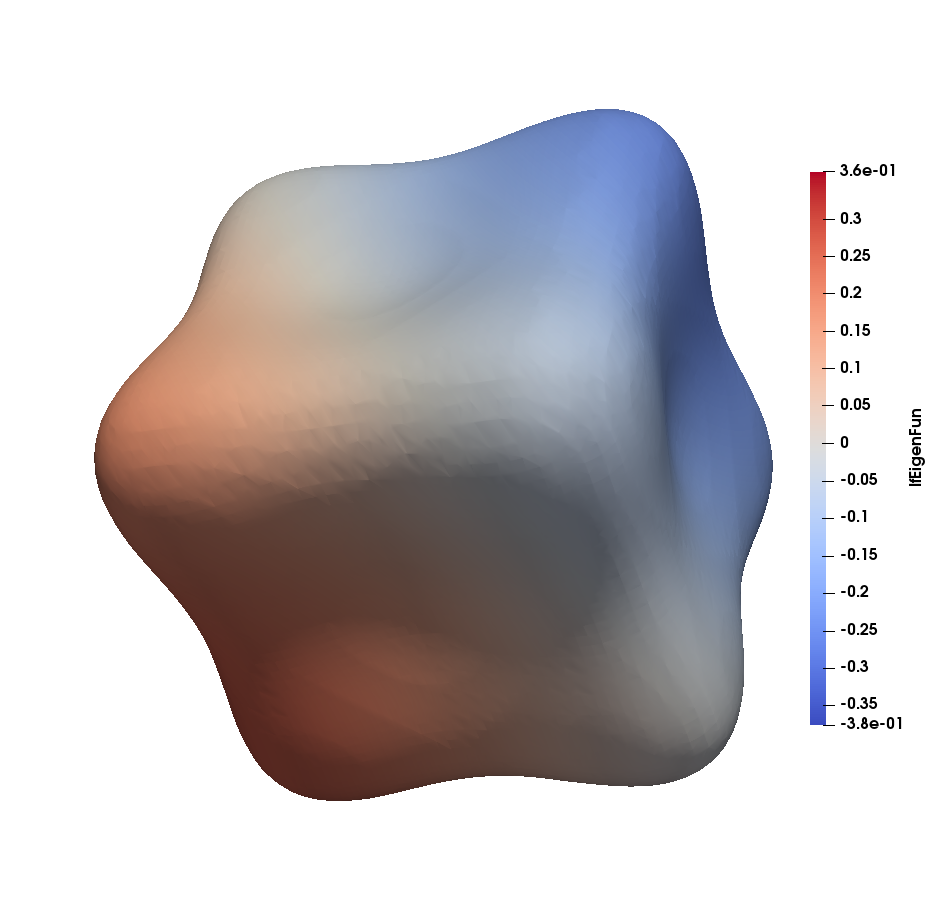}}
\hspace{0.01\linewidth}
\subfigure[$\lambda_{h,3}  = 0.5915189381$]{
\includegraphics[width=0.3\linewidth]{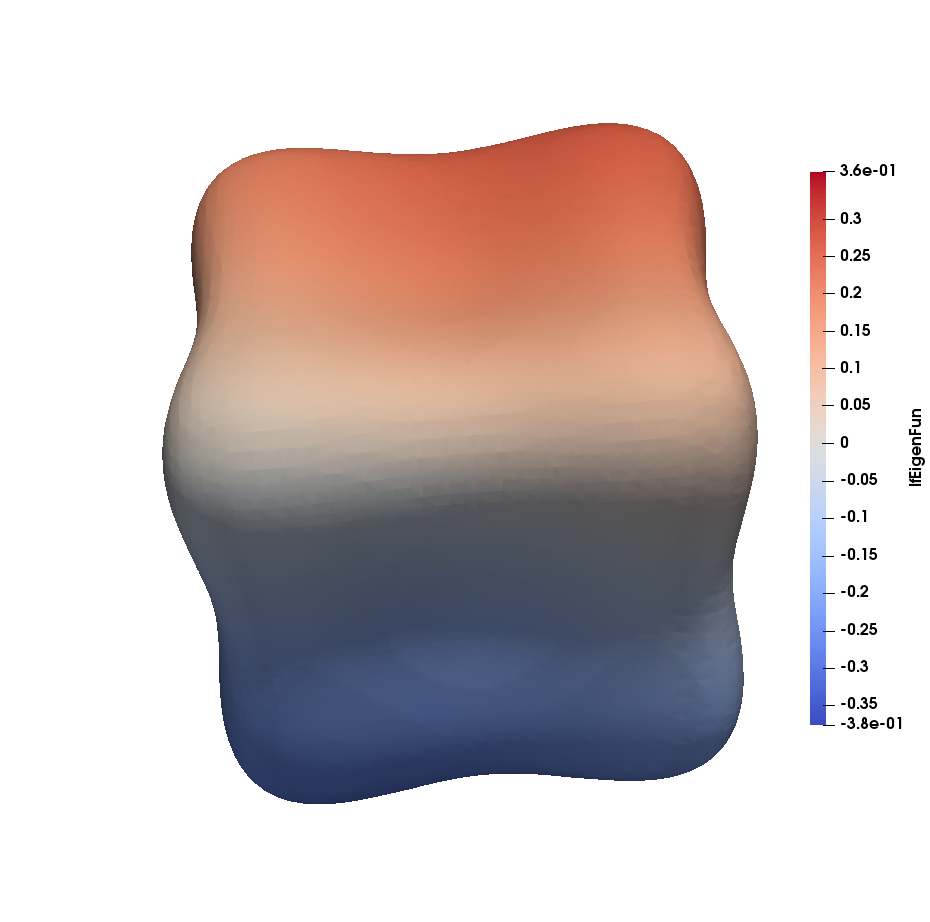}}
\vfill
\subfigure[$\lambda_{h,4}  = 0.5915934027$]{
\includegraphics[width=0.3\linewidth]{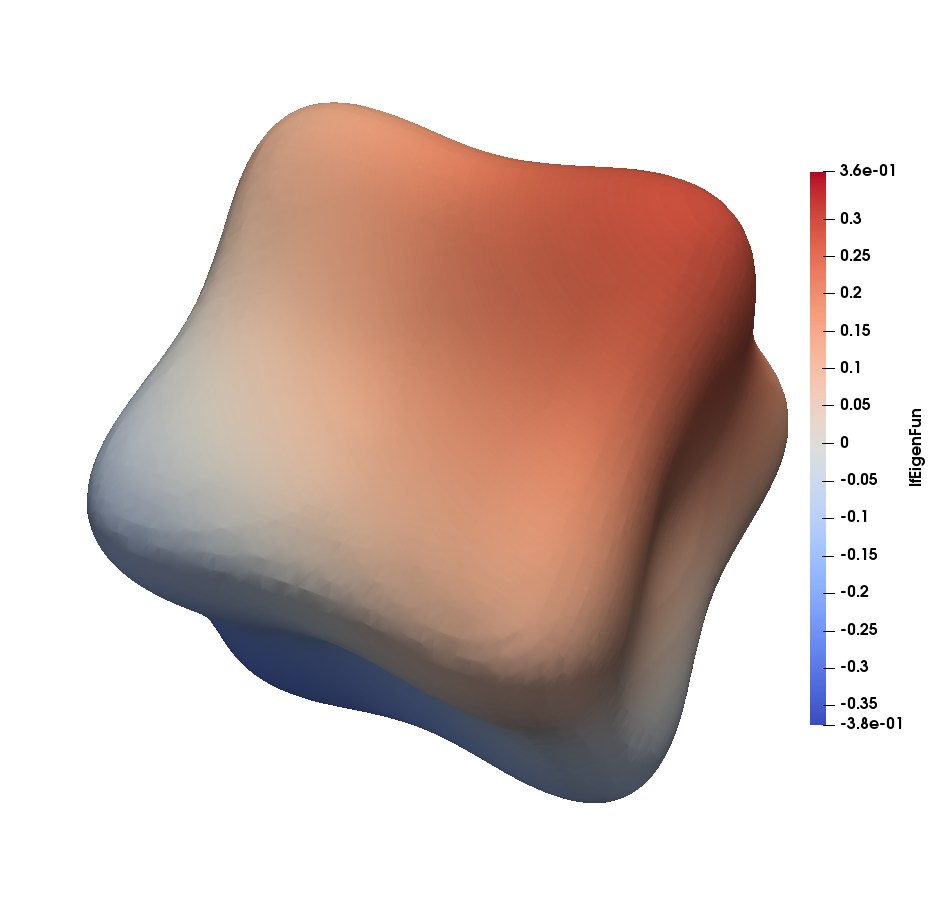}}
\hspace{0.01\linewidth}
\subfigure[$\lambda_{h,5}  = 1.612570362$]{
\includegraphics[width=0.3\linewidth]{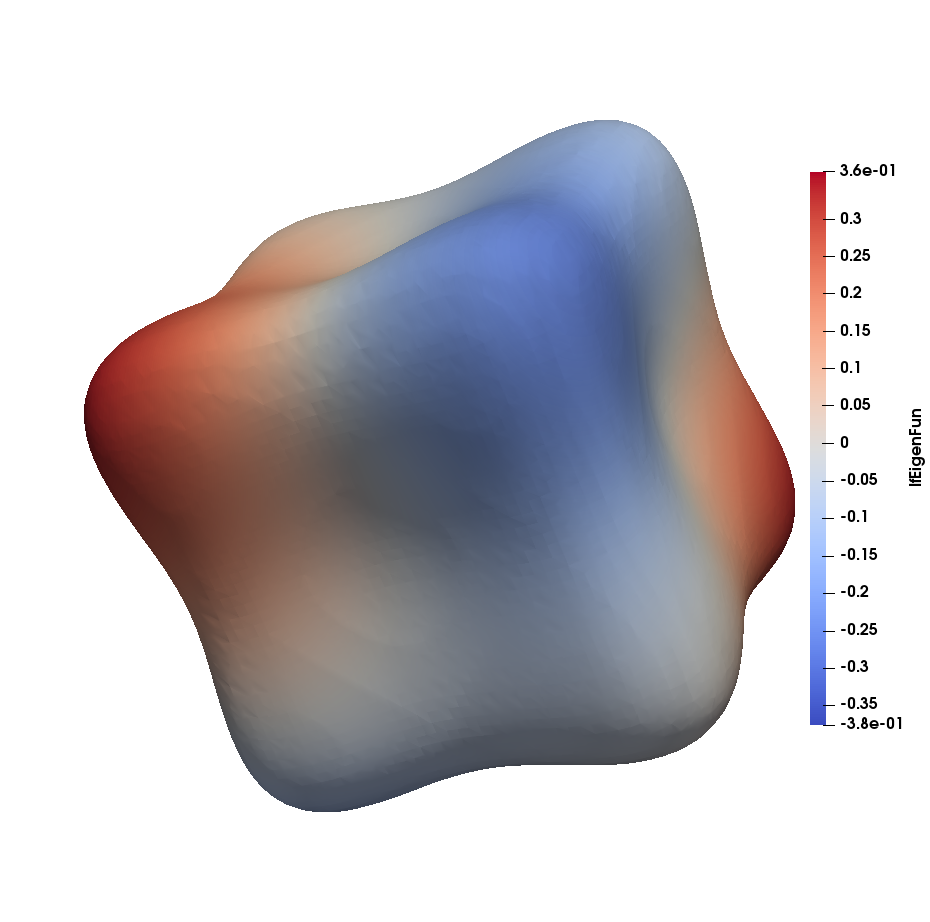}}
\hspace{0.01\linewidth}
\subfigure[$\lambda_{h,6}  = 1.612570431$]{
\includegraphics[width=0.3\linewidth]{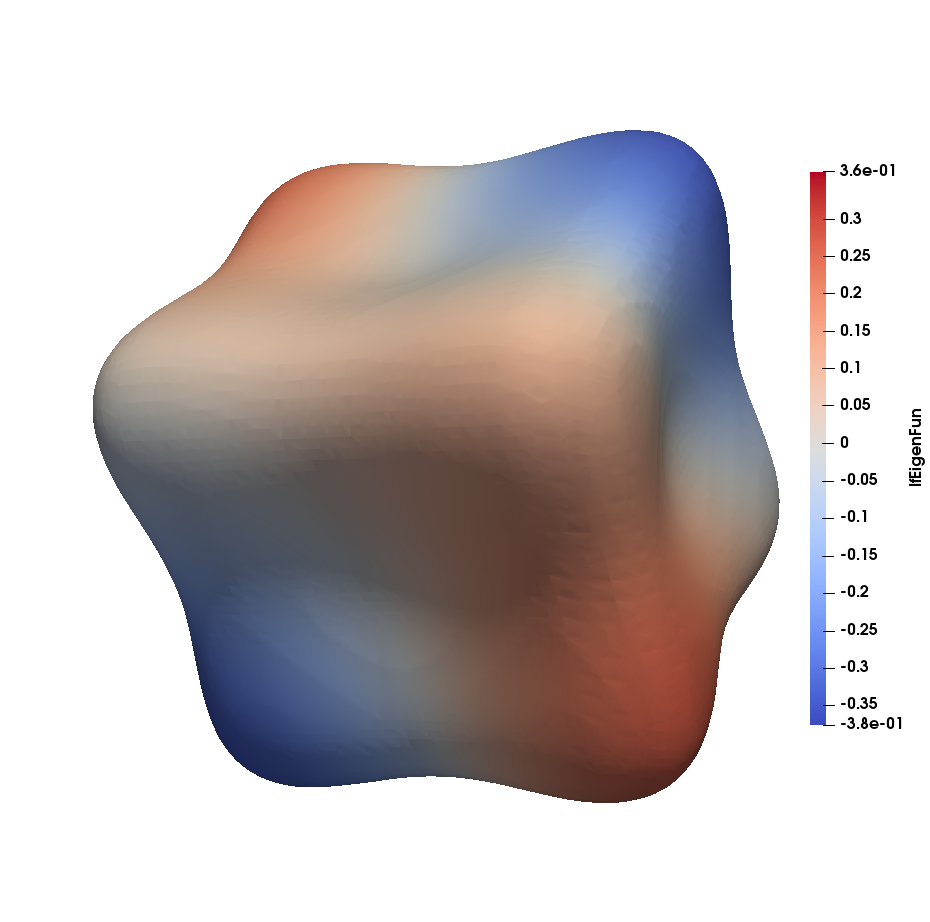}}
\caption{The Laplace-Beltrami eigenvalues and eigenfunctions on a general surface.}\label{tooth eigenvalues}
\end{figure}

\section{Conclusion}
In this paper, we develop a new trace finite element method for partial differential equations on the smooth surfaces. There
is no geometric error in the discretization of the surface since numerical integration is done directly  on the surfaces. We focus on the  Laplace-Beltrami eigenvalue problems, which \m{have not} been studied by the trace finite element
methods. The difficulties arise from the fact that the dimension of the finite element space is usually smaller than the degree of {freedoms}. Therefore, the direct application of the trace FEM to the eigenvalue
problem will lead to many false eigenvalues.

We analyse carefully the eigenvalues of the
discrete problems of the trace FEM and show that the geometric consistency property of the trace FEMs
 can improve the situation largely. Theoretically, it is possible that the discrete system of our method \m{does not} have any false eigenvalues if the surface is not
 \m{a part of} the zero level-set of some finite element function defined in the bulk domain. This is also verified by our numerical experiments. Furthermore, for the corresponding generalized matrix eigenvalue problem with false eigenvalues,
we show that a perturbed method can be used to compute the true eigenvalues. It turns out that
the algorithm \m{works well} for both the standard trace finite element method and
the geometric consistent method.
Our experiments also show that the geometric
consistent method \m{improves} the accuracy dramatically in comparison with the traditional methods. We present a numerical analysis of our method for both the Laplace-Beltrami equation and the corresponding eigenvalue problem. In comparison \m{to the methods} defined on the discrete surface, the analysis becomes much simpler.  Our method can also be easily extended to any higher order finite element methods by considering proper higher order numerical quadrature on the smooth surface.

In this work, we mainly \m{focus on} the approximation \m{property} of the method. It is known that the condition number of the trace finite element method might be large due to the irregular surface mesh induced by the bulk triangulations \cite{olshanskii2010finite,olshanskii2012surface}. Usually some stabilization terms can be added to the method to improve the numerical stability \cite{burman2015stabilized,burman2016full,larson2020stabilization,lehrenfeld2018stabilized}. These techniques can be used in our method as well and it might become necessary for higher order methods. This will be left for future work.
\bibliographystyle{siam}
\bibliography{highquad}
\end{document}